\numberwithin{equation}{section} 
\def\th@plain{%
  \thm@notefont{}
  \itshape 
}
\def\th@definition{%
  \thm@notefont{}
  \normalfont 
}
\newtheorem{thm}{Theorem}[section]
\newtheorem{prop}[thm]{Proposition}
\newtheorem{cor}[thm]{Corollary}
\theoremstyle{definition}
\newtheorem{defn}[thm]{Definition}
\newtheorem{oss}[thm]{Remark}
\newcommand{\Z}{\mathbb{Z}}
\newcommand{\N}{\mathbb{N}}
\newcommand{\R}{\mathbb{R}}
\renewcommand{\epsilon}{\varepsilon}
\setlist[description]{nosep}
\begin{document}

\title{Chirality transitions in frustrated ferromagnetic spin chains: a link with the gradient theory of phase transitions}

\author{{\scshape Giovanni Scilla}\\
Department of Mathematics and Applications ``R. Caccioppoli''\\ University of Naples ``Federico II''\\
Via Cintia, Monte S. Angelo - 80126 Naples \\
(ITALY)
 \\
\\
{\scshape Valerio Vallocchia}\\
Department of Mathematics \\ University of Rome ``Tor Vergata''\\
Via della Ricerca Scientifica 1, 00133 Rome \\
(ITALY)}
\date{}

\maketitle

\begin{abstract}
We study chirality transitions in frustrated ferromagnetic spin chains, in view of a possible connection with the theory of Liquid Crystals. A variational approach to the study of these systems has been recently proposed by Cicalese and Solombrino, focusing close to the helimagnet/ferromagnet transition point corresponding to the critical value of the frustration parameter $\alpha=4$. We reformulate this problem for any $\alpha\geq0$ in the framework of surface energies in nonconvex discrete systems with nearest neighbours ferromagnetic and next-to-nearest neighbours antiferromagnetic interactions and we link it to the gradient theory of phase transitions, by showing a uniform equivalence by $\Gamma\hbox{-}$convergence on $[0,4]$ with Modica-Mortola type functionals.\\ 
{\bf Keywords:} {$\Gamma\hbox{-}$convergence, Equivalence, Frustrated lattice systems, Chirality transitions, Modica-Mortola}
\end{abstract}

\section{Introduction}
\label{intro}

The phenomenon of frustration arises from the competition between different interactions, in a continuous or discrete physical system, that
favor incompatible ground states. It occurs, for instance, in the liquid-crystalline phases of
\emph{chiral} molecules: a chiral molecule cannot be superimposed on its mirror image through any proper rotation or translation. The main effect of chirality is that chiral molecules do not align themselves parallel to their neghbors but tend to form a characteristic angle with them (see, e.g., \cite{BK,KS,Dierking}).

Edge-sharing chains of cuprates, instead, provide an example of frustrated lattice systems, where the frustration results from the competition between ferromagnetic (F) nearest-neighbour (NN) and antiferromagnetic (AF) next-nearest-neighbour (NNN) interactions (see,  e.g.,~\cite{D}). 

In this paper we study the asymptotic properties of a one-dimensional frustrated spin system at zero temperature {via $\Gamma$\hbox{-}convergence (see \cite{GCB} and \cite{DalMaso})}, focusing also on the variational equivalence with problems in gradient theory of phase transitions (see, e.g.,~\cite{GCB,BH} for a simple introduction to the topic). {Our contribution has been inspired by the recent results about the variational discrete-to-continuum analysis of such systems provided by Cicalese and Solombrino~\cite{CS} in the vicinity of the so called ``helimagnet/ferromagnet transition point'', exhibiting at a suitable scale different scenarios not detected by a first-order $\Gamma$\hbox{-}limit. Indeed, the $\Gamma$\hbox{-}convergence approach provides a rigorous way of deriving a continuum limit for discrete systems as the number of interacting particles is increasing. However, the $\Gamma$\hbox{-}limit does not always capture the main features of the discrete model and in some cases more refined approximations are needed (see, e.g.,~\cite{BC,CS,Trusk,Bra13}). This motivated the derivation of the \emph{uniformly $\Gamma$\hbox{-}equivalent theories}, introduced by Braides and Truskinovsky~\cite{BT} for a wide class of discrete systems and developed, e.g., in the framework of fracture mechanics, by Scardia, Schl\"omerkemper and Zanini~\cite{SSZ} for one-dimensional chains of atoms with Lennard-Jones interactions between nearest-neighbours.} Our paper can also be seen as a first step in the analysis of chirality transitions in more complicated physical systems like as chiral liquid crystals. A discrete-to-continuum analysis via $\Gamma\hbox{-}$convergence of some problems in liquid crystals has been recently treated, e.g., by Braides, Cicalese and Solombrino~\cite{BCS}, but this promising research field is still largely unexplored.

We consider the so-called F-AF spin chain model, where the state of the system is described by an $S^1$-valued spin variable $u=(u^i)$ parameterized over the points of the set $\frac{1}{n}\Z\cap[0,1]$, $n\in\N$. The energy of a given state of the system is 
\begin{equation}
E_n^\alpha(u)=-\alpha\sum_{i=0}^{n-1}(u^i,u^{i+1})+\sum_{i=0}^{n-1}(u^i,u^{i+2})-nm_\alpha,
\label{introenergy0}
\end{equation}
with \emph{periodic} boundary conditions $(u^0,u^1)=(u^{n},u^{n+1})$, where $\alpha\geq0$ is the \emph{frustration parameter}, $(\cdot,\cdot)$ denotes the scalar product between vectors in $\R^2${and $m_\alpha$ are constants depending on $\alpha$ (see (\ref{minimalpha}) for the precise definition).

The first term of the energy (\ref{introenergy0}) is ferromagnetic and favors the alignment of NN spins, while the second,
being antiferromagnetic, frustrates it as it favors antipodal NNN spins. Consequently, the frustration of the system depends on the parameter $\alpha$. In order to characterize the ground states of this system and their dependence on the value of $\alpha$, we first associate to each pair of nearest neighbours $u^i,u^{i+1}$ the corresponding oriented central angle $\theta^i\in[-\pi,\pi)$. Then, by the periodicity assumption, we may reread the energies in terms of this scalar variable as
\begin{equation}
E_n^\alpha(\theta)=-\frac{\alpha}{2}\sum_{i=0}^{n-1}\Bigl(\cos\theta^i+\cos\theta^{i+1}\Bigr)+\sum_{i=0}^{n-1}\cos(\theta^i+\theta^{i+1})-nm_\alpha,
\label{introenergy2}
\end{equation}
{and follow the approach by Braides and Cicalese~\cite{BC} for lattice systems of the form (\ref{introenergy2}). Indeed, by ``minimizing out'' for each fixed $i$ the nearest neighbours interactions, we are led to the definition of the \emph{effective potential} $W_\alpha$ (equation (\ref{Wpotential})) such that 
\begin{equation}
E_n^\alpha(\theta)\geq\displaystyle\sum_{i=0}^{n-1}W_\alpha(\theta^i),
\label{introinequality}
\end{equation}
where $W_\alpha$ is convex with minimum at $\theta=\theta_\alpha=0$ if $\alpha\geq4$, while it is a double-well potential with wells at $\theta=\pm\theta_\alpha$ if $0\leq\alpha\leq4$ (see Fig.~\ref{potentialfig}). Since the inequality in (\ref{introinequality}) is strict if $\theta^i\neq\theta^{i+1}$ or $\theta^i\neq\pm\theta_\alpha$, we deduce that if $\alpha\geq4$ the nearest neighbours prefer to stay aligned (ferromagnetic order); if $0\leq\alpha\leq4$, instead, the minimal configurations of $E_n^\alpha$ are $\theta^i=\theta^{i+1}\in\{\pm\theta_\alpha\}$; that is, the angle between pairs of nearest neighbours $u^i,u^{i+1}$ and $u^{i+1},u^{i+2}$ is constant and depending on the particular value of $\alpha$ (helimagnetic order). The two possible choices for $\theta_\alpha$ (a degeneracy known in literature as \emph{chirality symmetry}) correspond to either clockwise or counterclockwise spin rotations, or, equivalently, to a positive or a negative chirality (see Fig.~\ref{figchir}).

\begin{figure}[htp]
\center
\includegraphics[scale=.4]{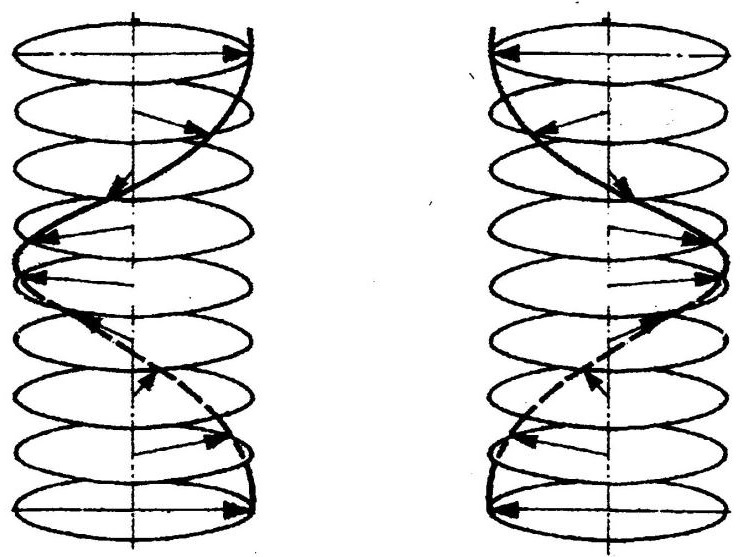}
\caption{A schematic representation of the ground states of the spin system for $0\leq \alpha < 4$ for clockwise (on the left) and counterclockwise (on the right) chirality (picture taken from \cite{Dierking}).} 
\label{figchir}
\end{figure}

The asymptotic behaviour of energies $E_n^\alpha$ as $n\to\infty$ and for fixed $\alpha$ (Theorem~\ref{gammalim}) reflects such different regimes for the ground states. If $\alpha\geq4$ the limit is trivially finite (and equal to zero) only on the constant function $\theta\equiv0$, while if $0\leq\alpha<4$ it is finite on functions with bounded variation taking only the two values $\{\pm\theta_\alpha\}$ and it counts the number of chirality transitions. More precisely,
\begin{equation*}
\displaystyle\Gamma\hbox{-}{\lim_{n\to+\infty}}E^\alpha_n(\theta)=C_\alpha\#(S(\theta)),
\end{equation*}
where $S(\theta)$ is the jump set of function $\theta$ and $C_\alpha=C(\alpha)$ is the cost of each chirality transition. The value $C_\alpha$ (see Section~\ref{defcrease}) represents the energy of an interface which is obtained by means of a `discrete optimal-profile problem' connecting the two constant (minimal) states $\pm\theta_\alpha$. It is continuous as a function of $\alpha$ on the interval $[0,4)$ (as shown by Proposition~\ref{continuity}) and can be defined to be equal to 0 for $\alpha\geq4$. Moreover, $C_\alpha\to0$ as $\alpha\to4$ and (compare with \cite{DK} and Remark~\ref{osservazione})
\begin{equation}
C_\alpha\sim\frac{\sqrt{2}}{3}(4-\alpha)^{3/2},\, \text{as }\alpha\to4^-.
\label{asymptreg}
\end{equation}
In a recent paper \cite{CS}, Cicalese and Solombrino investigated the asymptotic behaviour of this system
close to the ferromagnet/helimagnet transition point; that is, they found the correct scaling (heuristically suggested by (\ref{asymptreg})) to detect the symmetry breaking and to compute the asymptotic behaviour of the scaled energy describing this phenomenon as $\alpha$ is close to 4. They let the parameter $\alpha$ depend on $n$ and be close to 4 from below; i.e., they rewrite energies (\ref{introenergy2}) in terms of $4-\alpha_n$, with $4-\alpha_n\to0$ as $n\to\infty$. 

We state their result in a slight different form, useful for the sequel. More precisely, we prove in Theorem~\ref{cicasol} that an analogous result can be obtained if we choose as order parameter the ``flat'' angular variable
\begin{equation*}
v=\frac{\theta}{\theta_\alpha},
\end{equation*}
which is equivalent to the variable considered in~\cite{CS} in the regime of small angles. We compute the $\Gamma\hbox{-}$limit $F^0$ as $n\to\infty$, $\alpha=\alpha_n\to4$ with respect to the strong $L^1$-topology of the scaled energies
\begin{equation}
F^{\alpha_n}_n(v):=\frac{E_n^{\alpha_n}(v)}{\mu_{\alpha_n}}=\frac{8E_n^{\alpha_n}(v)}{\sqrt{2}{(4-\alpha_n)^{3/2}}},
\label{riscalate}
\end{equation}
and show that, within this scaling, several regimes are possible depending on the value
\begin{equation*}
l:=\lim_n \frac{\sqrt{2}}{4n(4-\alpha_n)^{1/2}}.
\end{equation*}
Namely, if $l=0$ then $F^0(v)=\frac{8}{3}\#(S(v)),\,v\in BV(I,\{\pm1\})$, if $l=+\infty$ then $F^0$ is finite (and equal to zero) only on constant functions, while in the intermediate case $l\in(0,+\infty)$ we get
\begin{equation*}
F^0(v)=\displaystyle\frac{1}{l}\int_{I}\Bigl(v^2(t)-1\Bigr)^2\,dt + {l}\int_{I}(\dot{v}(t))^2\,dt,\,v\in W^{1,2}_{|per|}(I),
\end{equation*}
where $I=(0,1)$, $BV(I,\{\pm1\})$ is the space of functions of bounded variation defined on $I$ and taking the values $\{\pm1\}$, and $W^{1,2}_{|per|}(I)=\{v\in W^{1,2}(I):\, |v(0)|=|v(1)|\}$.

Motivated by the particular form of this result and in the spirit of Braides and Truskinovsky\cite{BT}, with Theorem~\ref{equivalence} we find a variational link between such energies (seen as a `parametrized' family of functionals) and the gradient theory of phase transitions, in the framework of the \emph{equivalence} by $\Gamma\hbox{-}$convergence. Roughly speaking, two families of functionals are equivalent by $\Gamma\hbox{-}$convergence if they have the same $\Gamma\hbox{-}$limit (see Definition~\ref{defequiv} and the subsequent ones for the rigorous definitions useful in this framework). More precisely, we show the \emph{uniform} equivalence by $\Gamma\hbox{-}$convergence on $[0,4]$ of the energies $F^{\alpha}_n(v)$ defined in (\ref{riscalate}) with the ``Modica-Mortola type'' functionals given by
\begin{equation*}
G_n^{\alpha}(v)=
\displaystyle\frac{1}{\mu_\alpha}\Bigl[{\lambda_{n,\alpha}}\int_{I}\Bigl(v^2-1\Bigr)^2\,dt + \frac{M_\alpha^2}{\lambda_{n,\alpha}}\int_{I}(\dot{v})^2\,dt\Bigr],\,v\in W^{1,2}_{|per|}(I),
\end{equation*}
where $\lambda_{n,\alpha}=2n\theta_\alpha^4$ and $M_\alpha=3C_\alpha/8$.

The value $\alpha_0=4$ is a \emph{singular point}, since the $\Gamma\hbox{-}$limit of $G_n^{\alpha}$ will depend on choice of the particular sequence $\alpha_n\to\alpha_0^-=4^-$. Each $\alpha_0\in[0,4)$, instead, is a \emph{regular point}; i.e., it is not singular. As a consequence of Theorem~\ref{equivalence}, we deduce (see Corollary~\ref{corollario}) the uniform equivalence of the energies $E^{\alpha}_n(\theta)$ for $\alpha\in[0,4)$ with the family
\begin{equation*}
H_n^{\alpha}(\theta)=
\displaystyle\frac{\lambda_{n,\alpha}}{\theta^4_{\alpha}}\int_{I}\Bigl(\theta^2(t)-{\theta^2_{\alpha}}\Bigr)^2\,dt + \frac{M_\alpha^2}{\lambda_{n,\alpha}\theta_{\alpha}^2}\int_{I}(\dot{\theta}(t))^2\,dt,\,\theta\in W^{1,2}_{|per|}(I),
\end{equation*}
whose potentials $\mathcal{W}_\alpha(\theta):=(\theta^2-{\theta^2_{\alpha}})^2$ have the wells located at the minimal angles $\theta=\pm\theta_\alpha$.

As a final remark, we would like to observe that our result can be useful also to analyze more general problems of interest for the applied community. For instance, a natural extension would be the case of $S^2$-valued spins, that has been recently investigated by Cicalese, Ruf and Solombrino~\cite{CRS} in the vicinity of the transition point. In that paper, the authors modify the energies penalizing the distance of the $S^2$ field from a finite number of copies of $S^1$ and prove the emergence of non-trivial chirality transitions. However, even in the case of values in $S^1$, the Villain Helical $XY$-model studied there could be attacked with our approach, at least in the regime of ``strong'' ferromagnetic interaction considered therein by the authors.

\section{Setting of the problem}
Preliminarily, we fix some notation that will be used throughout. We denote by $I=(0,1)$ and by $\lambda_n=\frac{1}{n},n\in\N$ a positive parameter.
Given $x\in\R$, we denote by $\lfloor x\rfloor$ the integer part of $x$. The symbol $S^1$ stands for the standard unit sphere of $\R^2$. Given a vector $v\in\R^2$ with components $v_1$ and $v_2$ with respect to the canonical basis of $\R^2$, we will use the notation $v=(v_1|v_2)$. Given two vectors $v,w\in\R^2$ we will denote by $(v,w)$ their scalar product. Here and in the following, $\mathcal{U}_n(I)$ will be the space of the functions $w:\lambda_n\Z\cap[0,1]\to S^1$, $\Theta_n(I)$ the space of the functions $\varphi:\lambda_n\Z\cap[0,1]\to [-\frac{\pi}{2},\frac{\pi}{2}]$ and we use the notation $w^i=w(i\lambda_n)$, $\varphi^i=\varphi(i\lambda_n)$; $\bar{\mathcal{U}}_n(I)$ will denote the subspace of those $w\in\mathcal{U}_n(I)$ satisfying the following \emph{periodic boundary condition}
\begin{equation}
(w^1,w^0)=(w^{n+1}, w^{n}).
\label{periodic}
\end{equation}
Analogously, $\bar{\Theta}_n(I)$ will denote the subspace of those $\varphi\in\Theta_n(I)$ such that $\varphi^0=\varphi^n$.

We will identify each lattice function $w\in\bar{\mathcal{U}}_n(I)$ with its piecewise-constant interpolation belonging to the class
\begin{equation*}
\mathcal{C}_n(I)=\{w:\R\to S^1:\,w(t)=w(\lambda_ni) \text{ if }t\in(i,i+1)\lambda_n,\,i\in\{0,1,\dots,n-1\}\},
\end{equation*}
while the symbol $\mathcal{D}_n(I)$ will denote the analogous space for functions $\varphi\in\bar{\Theta}_n(I)$.

Given a pair of vectors $v=(v_1|v_2), w=(w_1|w_2)\in S^1$, we define the function $\chi[v,w]: S^1\times S^1\rightarrow \{\pm1\}$  as
\begin{equation}
\chi[v,w]=\text{sign}(v_1w_2-v_2w_1),
\label{determinant}
\end{equation}
with the convention that $\text{sign}(0)=-1$, and the corresponding oriented central angle $\theta\in[-\pi,\pi)$ by
\begin{equation}
\theta=\chi[v,w]\arccos((v, w)).
\label{orangle}
\end{equation}
The positivity of the determinant in (\ref{determinant}) represents the counterclockwise ordering of the vectors $v$ and $w$.

\subsection{The model energies $E_n^\alpha$}

We consider the energy of a given state $u$ of the F-AF spin chain model, defined as
\begin{equation}
E_n^\alpha(u)=P_n^\alpha(u)-nm_\alpha=-\alpha\sum_{i=0}^{n-1}(u^i,u^{i+1})+\sum_{i=0}^{n-1}(u^i,u^{i+2})-nm_\alpha,
\label{energy}
\end{equation}
where $u\in\mathcal{C}_n(I)$, $\alpha\geq0$ and (see \cite[Proposition~3.2]{CS})
\begin{equation}
m_\alpha= \frac{1}{n}\min_{u\in L^\infty(I,S^1)}P_n^\alpha(u)=
\begin{cases}
-\Bigl(\frac{\alpha^2}{8}+1\Bigr) & \text{if }\alpha\in[0,4],\\
-\alpha+1 & \text{if }\alpha\in[4,+\infty).
\end{cases}
\label{minimalpha}
\end{equation}

First we note that, thanks to the periodicity assumption (\ref{periodic}), we can write the energies (\ref{energy}) equivalently in the form
\begin{equation}
E_n^\alpha(u)=-\frac{\alpha}{2}\sum_{i=0}^{n-1}\Bigl[(u^i,u^{i+1})+(u^{i+1},u^{i+2})\Bigr]+\sum_{i=0}^{n-1}(u^i,u^{i+2})-nm_\alpha.
\label{energy1}
\end{equation}
Now we associate to each pair of neighbouring spins $u^i,u^{i+1}$ the corresponding oriented central angle $\theta^i$ defined as in (\ref{orangle}), and taking $\theta^i$ as (scalar) order parameter, the energies (\ref{energy1}) can be rewritten as
\begin{equation}
E_n^\alpha(\theta)=-\frac{\alpha}{2}\sum_{i=0}^{n-1}\Bigl(\cos\theta^i+\cos\theta^{i+1}\Bigr)+\sum_{i=0}^{n-1}\cos(\theta^i+\theta^{i+1})-nm_\alpha,
\label{energy2}
\end{equation}
where $\theta\in\mathcal{D}_n(I)$.

\subsection{Ground states of $E_n^\alpha$}\label{groundst}
In this section, we focus on the ground states of the energies $E_n^\alpha$. We will show the emergence of \emph{chiral} ground states for $\alpha\in[0,4]$ by means of a double-minimization technique introduced by Braides and Cicalese in \cite{BC} for lattice systems of the form (\ref{energy2}). Following their approach, for each $i=0,1,\dots,n-1$ we fix the next-to-nearest neighbour interactions $\theta^i+\theta^{i+1}=2\theta$ and solve the minimum problem
\begin{equation}
\min_{\theta^i\in[-\pi,\pi)}\Bigl\{-\frac{\alpha}{2}\Bigl[\cos\theta^i+\cos(2\theta-\theta^i)\Bigr]+\cos2\theta-m_\alpha\Bigr\}.
\label{minprob}
\end{equation}
By a direct computation, we find that the unique minimizers in (\ref{minprob}) for $\alpha\neq 0 $ are   $\theta^i=\theta^{i+1}=\theta$ if $\theta\in(-\pi/2,\pi/2)$, and  $\theta^i=\theta^{i+1}=\theta-\pi$ if {$|\theta| \in(\pi/2,\pi)$}, while for $\alpha =0$ we have $\theta^i=\theta^{i+1}=\pm\pi/2$. The following picture shows that, up to a reparametrization, $\theta$ and $\theta-\pi$ actually represent the same minimizer.\\

\begin{minipage}[b]{0.39\linewidth}

\includegraphics[width=1\textwidth]{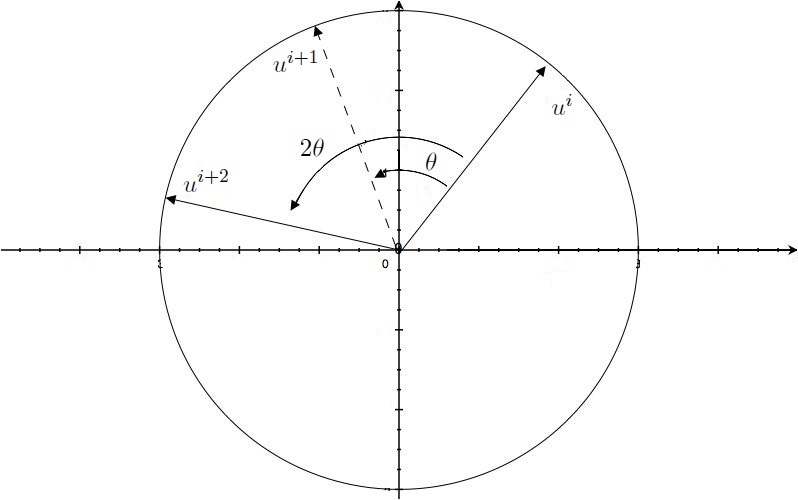} 
\begin{center}(a) The angle between NN is $\theta$
\end{center}
\end{minipage}
\begin{minipage}[b]{0.39\linewidth}

\includegraphics[width=1\textwidth]{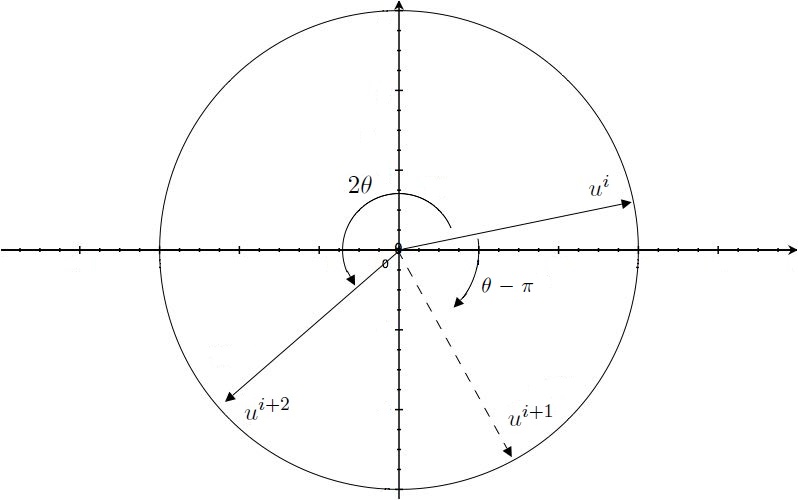} 
\begin{center} (b) The angle between NN is $\theta-\pi$
\end{center}
\end{minipage}
\\
\\
Without loosing generality we will assume up to the end that $\theta^i=\theta^{i+1}=\theta\in J$, $J:=[-\pi/2,\pi/2]$ and correspondingly we define the \emph{effective potential} as
\begin{equation}
W_\alpha(\theta)=\cos2\theta-\alpha\cos\theta-m_\alpha.
\label{Wpotential}
\end{equation}

The potential $W_\alpha$ is thus obtained by integrating out the effect of nearest-neighbour interactions
optimizing over atomic-scale oscillations, and its properties strongly depend on the value $\alpha$. 
Indeed, if $0\leq\alpha<4$ then $W_\alpha$ is a ``double-well'' potential, while if $\alpha\geq4$ the potential is convex (see Fig.~\ref{potentialfig}). Moreover,
\begin{equation}
\arg\min W_\alpha(\theta)=
\begin{cases}
\{\pm\theta_\alpha\}:=\{\pm\arccos(\frac{\alpha}{4})\}, & \text{if }\alpha\in[0,4],\\
\{0\}, & \text{if }\alpha\in[4,+\infty),
\end{cases}
\end{equation}
We note that by the definition of $W_\alpha$ and (\ref{minprob}) we get
\begin{equation}
E_n^\alpha(\theta)\geq\sum_{i=0}^{n-1}W_\alpha(\theta^i),
\label{inequality}
\end{equation}
the inequality being strict if $\theta^i\neq\theta^{i+1}$ or $\theta^i\neq\pm\theta_\alpha$. In particular, $E_n^\alpha(\theta)\geq0$.

\begin{figure}[htb]
\includegraphics[scale=0.33]{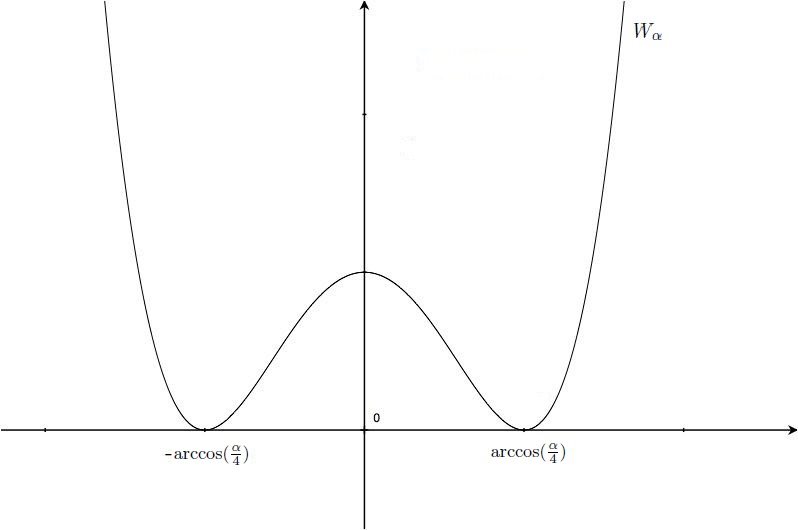}\qquad
\includegraphics[scale=0.33]{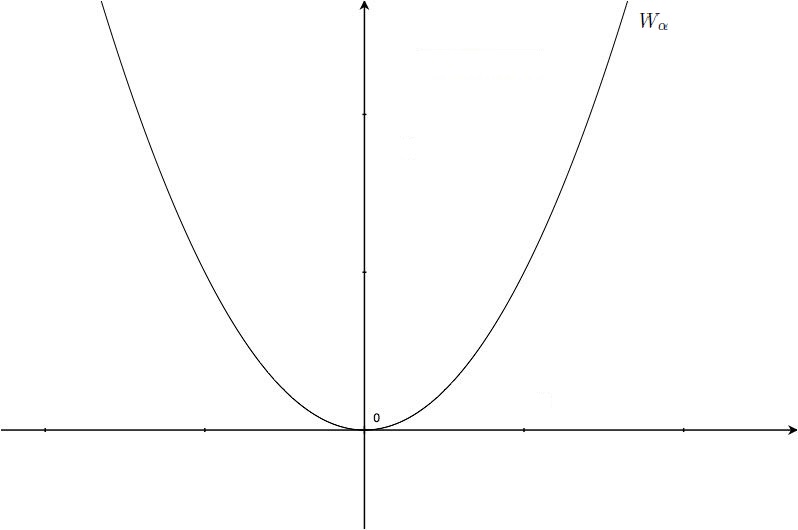}
\caption{The potential $W_\alpha$ for $0\leq\alpha<4$ (on the left) and for $\alpha\geq4$ (on the right).}
\label{potentialfig}
\end{figure}

Thus, the minimization procedure leading to the definition of $W_\alpha$ (and then to inequality (\ref{inequality})) allows us to deduce some information about the ground states of the energies $E_n^\alpha$ from the properties of this potential. More precisely, if $\alpha\leq4$ the minimal configurations of $E_n^\alpha$ are $\theta^i=\theta^{i+1}\in\{\pm\theta_\alpha\}$; that is, the angle between pairs of nearest neighbours $u^i,u^{i+1}$ and $u^{i+1},u^{i+2}$ is constant and depending on the particular value of $\alpha$. If $\alpha\geq4$, instead, the nearest neighbours prefer to stay aligned ($-\theta_\alpha=+\theta_\alpha=0$). \\

Let be $\theta\in\mathcal{D}_n(I)$. We may regard the energies $E_n^\alpha$  as defined on a subset of $L^\infty(I,J)$ and consider their extension on $L^\infty(I,J)$. With a slight abuse of notation, we set $E_n^\alpha: L^\infty(I,J)\to[0,+\infty]$ as
\begin{equation}
E_n^\alpha(\theta)=
\begin{cases}
\displaystyle-\frac{\alpha}{2}\sum_{i=0}^{n-1}(\cos\theta^i+\cos\theta^{i+1})+\sum_{i=0}^{n-1}\cos(\theta^i+\theta^{i+1})-nm_\alpha, & \text{if }\theta\in\mathcal{D}_n(I),\\
+\infty, & \text{otherwise}.
\end{cases}
\label{exten}
\end{equation}

\section{Limit behaviour of $E_n^\alpha$ with fixed $\alpha$}

Our first result is the explicit computation of the $\Gamma\hbox{-}$limit, as $n\to\infty$, of the energies $E_n^\alpha$ with fixed $\alpha\in[0,+\infty)$. As we will show with Theorem~\ref{gammalim}, the asymptotic behaviour of the energies $E_n^\alpha$ reflects the different regimes for the ground states outlined in Section~\ref{groundst}. Indeed, the limit is non-trivial only in the helimagnetic regime ($0\leq\alpha<4$), representing the energy the system spends on the scale 1 for a finite number of chirality transitions from $-\theta_\alpha$ to $\theta_\alpha$.

\subsection{Crease transition energies}\label{defcrease}

The cost $C_\alpha$ of each chirality transition can be characterized as the energy of an interface which is obtained by means of a `discrete optimal-profile problem' connecting the two constant (minimal) states $\pm\theta_\alpha$. 

Let $\alpha\in[0,4)$. According to \cite[Section~2.2]{BC}, we define the \emph{crease transition energy} between $-\theta_\alpha$ and $\theta_\alpha$ as 
\begin{equation}
\begin{split}
C_\alpha :=C(-\theta_\alpha,\theta_\alpha)&\\
=\inf_{N\in\N}\min\Bigl\{\sum_{i=-\infty}^{+\infty}&\Bigl[\cos(\theta^{i}+\theta^{i+1})-\frac{\alpha}{2}(\cos\theta^{i}+\cos\theta^{i+1})-m_\alpha\Bigr]:\\
&\,\theta:\Z\to[-\pi/2,\pi/2],\,\theta^i=\text{sign}(i)\theta_\alpha,\, \text{if $|i|\geq N$}\Bigr\}.
\end{split}
\label{crease}
\end{equation}
We note that the infinite sums in (\ref{crease}) are well defined, since they involve only non negative terms and, actually, for fixed $N$ they are finite sums, since the summands are 0 for $i\geq N$ and $i\leq-N-1$. Moreover, it follows by the definition a useful symmetry property of the crease energy; that is, 
\begin{equation}
C(-\theta_\alpha,\theta_\alpha)=C(\theta_\alpha,-\theta_\alpha).
\label{symmetry}
\end{equation}

Now we prove that the optimal test function in (\ref{crease}) is constantly equal to $\pm\theta_\alpha$ only for $N\to+\infty$, thus relaxing the boundary condition as a condition at infinity in the definition of $C_\alpha$. We notice that an analogous property of crease energies has been showed by Braides and Solci in~\cite{BOx} for a one-dimensional system of Lennard-Jones nearest and next-to-nearest neighbour interactions.

\begin{prop}\label{propcrease}
The infimum in {\rm(\ref{crease})} is obtained for $N\to\infty$; that is,
\begin{equation}
\begin{split}
C_\alpha=\inf\Bigl\{\sum_{i=-\infty}^{+\infty} &\Bigl[\cos(\theta^{i}+\theta^{i+1})-\frac{\alpha}{2}(\cos\theta^{i}+\cos\theta^{i+1})-m_\alpha\Bigr]:\\
&\theta:\Z\to[-\pi/2,\pi/2], \lim_{i\to\pm\infty}\emph{sign}(i)\theta^i=\theta_\alpha\Bigr\}.
\end{split}
\label{crease2}
\end{equation}
Moreover, $C_\alpha>0$.
\end{prop}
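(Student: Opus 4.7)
The plan is to split the statement into its two claims. Denote the generic summand by $R(s,t):=\cos(s+t)-\tfrac{\alpha}{2}(\cos s+\cos t)-m_\alpha$, so that the series in both (\ref{crease}) and (\ref{crease2}) read $\sum_{i\in\Z}R(\theta^i,\theta^{i+1})$. The minimization producing $W_\alpha$, together with (\ref{inequality}), yields that $R$ is continuous and nonnegative on $J\times J$ and vanishes \emph{only} at the two points $(\pm\theta_\alpha,\pm\theta_\alpha)$.

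For the first claim, $C_\alpha\geq$ RHS of (\ref{crease2}) is immediate, since every competitor in (\ref{crease}) also satisfies the asymptotic condition. For the reverse inequality I would take any $\theta$ admissible for (\ref{crease2}) with finite total energy $E$, fix $\epsilon>0$ and, exploiting nonnegativity of each summand and the asymptotic condition, choose $M$ large enough that both tails of the series are smaller than $\epsilon/4$ and such that, by continuity of $R$ at its zeros, $R(\theta^M,\theta_\alpha)+R(-\theta_\alpha,\theta^{-M})<\epsilon/2$. Truncating to $\tilde\theta^i:=\theta^i$ for $|i|\leq M$ and $\tilde\theta^i:=\mathrm{sign}(i)\theta_\alpha$ for $|i|>M$ yields a competitor for (\ref{crease}) (with $N=M+1$) whose energy exceeds the interior part $\sum_{-M\leq i\leq M-1}R(\theta^i,\theta^{i+1})\leq E$ by only the two boundary-transition summands, hence is at most $E+\epsilon$; letting $\epsilon\to 0$ and then passing to the infimum over $\theta$ gives the desired bound.

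For strict positivity I would argue by quantitative separation of the two zeros of $R$. Fix $\delta\in(0,\theta_\alpha)$, set $A_\delta:=B_\delta(-\theta_\alpha)\cup B_\delta(\theta_\alpha)$, and define $c_1:=\min\{R(s,t):s\in J,\ t\in J\setminus A_\delta\}$ and $c_2:=\min\{R(s,t):s\in\overline{B_\delta(-\theta_\alpha)},\ t\in\overline{B_\delta(\theta_\alpha)}\}$. Both minima are strictly positive by compactness and the identification of the zero set of $R$; positivity of $c_2$ rests on the explicit value $R(-\theta_\alpha,\theta_\alpha)=2-\alpha^2/8$, which is $>0$ for $\alpha<4$. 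Now for any admissible $\theta$, either some $\theta^i\notin A_\delta$ and then $R(\theta^{i-1},\theta^i)\geq c_1$, or $\theta^i\in A_\delta$ for every $i$; in the latter case, disjointness of $B_\delta(\pm\theta_\alpha)$ together with the boundary conditions $\theta^i\to\pm\theta_\alpha$ as $i\to\pm\infty$ forces at least one pair of consecutive indices with $\theta^i\in B_\delta(-\theta_\alpha)$ and $\theta^{i+1}\in B_\delta(\theta_\alpha)$, contributing $\geq c_2$. Either way $\sum_i R(\theta^i,\theta^{i+1})\geq\min(c_1,c_2)>0$, whence $C_\alpha>0$.

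The only delicate step is the truncation in the first claim: one must ensure both boundary-transition summands are small simultaneously, which reduces to the pointwise asymptotic condition on $\theta$ and to continuity of $R$ at its zeros. Everything else is a standard compactness exercise; it is reassuring that $c_2$ degenerates as $\alpha\to 4^-$, in agreement with the scaling $C_\alpha\sim\tfrac{\sqrt{2}}{3}(4-\alpha)^{3/2}$ recalled in (\ref{asymptreg}).
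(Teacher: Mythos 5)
Your proof is correct and follows essentially the same route as the paper: the identity (\ref{crease2}) is obtained by the same truncation of an almost-optimal competitor near $\pm\theta_\alpha$, with the two boundary summands controlled by continuity of the interaction at its zeros (this is exactly the role of the modulus $\omega(\eta)$ in (\ref{function})). For the strict positivity the paper merely asserts that it follows from (\ref{inequality}) and the uniqueness of the minimizers of $W_\alpha$; your quantitative dichotomy (either some $\theta^i$ lies far from both wells, or a consecutive pair straddles them) is a correct and more explicit rendering of that same observation.
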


\begin{proof}
Let $\theta^i$ be a test function for the problem (\ref{crease2}) and denote by $\widetilde{C}_\alpha$ the infimum in (\ref{crease2}). With fixed $\eta>0$, let $N_\eta$ be such that $|\theta^i-\text{sign}(i)\theta_\alpha|<\eta$ for $|i|\geq N_\eta$, and define
\begin{equation*}
\theta^i_\eta=
\begin{cases}
\theta^i, & \text{if $|i|\leq N_\eta$}\\
\text{sign}(i)\theta_\alpha, & \text{if $|i|>N_\eta$}.
\end{cases}
\end{equation*}
We then have
\begin{equation*}
\begin{split}
&\sum_{i=-\infty}^{+\infty}\left[-\frac{\alpha}{2}(\cos\theta^{i}_\eta+\cos\theta^{i+1}_\eta)+\cos(\theta^{i}_\eta+\theta^{i+1}_\eta)-m_\alpha\right]\\
=&\sum_{i=-N_\eta-1}^{N_\eta}\left[-\frac{\alpha}{2}(\cos\theta^{i}_\eta+\cos\theta^{i+1}_\eta)+\cos(\theta^{i}_\eta+\theta^{i+1}_\eta)-m_\alpha\right]\\
=&\sum_{i=-N_\eta}^{N_\eta-1}\left[-\frac{\alpha}{2}(\cos\theta^{i}+\cos\theta^{i+1})+\cos(\theta^{i}+\theta^{i+1})-m_\alpha\right]\\
&-\frac{\alpha}{2}(\cos\theta^{N_\eta}+\cos\theta_{\alpha})+\cos(\theta^{N_\eta}+\theta_\alpha)-m_\alpha\\
&-\frac{\alpha}{2}(\cos\theta_{\alpha}+\cos\theta^{-N_\eta})+\cos(\theta^{-N_\eta}-\theta_\alpha)-m_\alpha\\
\leq &\sum_{i=-\infty}^{+\infty}\left[-\frac{\alpha}{2}(\cos\theta^{i}+\cos\theta^{i+1})+\cos(\theta^{i}+\theta^{i+1})-m_\alpha\right]+2\omega(\eta)
\end{split}
\end{equation*}
where
\begin{equation}
\omega(\eta):=\max\left\{-\frac{\alpha}{2}(\cos\theta+\cos\theta_\alpha)+\cos(\theta+\theta_\alpha)-m_\alpha:\,|\theta-\theta_\alpha|\leq\eta\right\}
\label{function}
\end{equation}
is infinitesimal as $\eta\to0$. This shows that the value $C_\alpha$ defined in (\ref{crease}) is less or equal than $\widetilde{C}_\alpha$. Then we are done, the converse inequality being trivial since any test function for problem (\ref{crease}) is a test function for problem (\ref{crease2}). The estimate $C_\alpha>0$ easily follows from (\ref{inequality}) and the fact that $\pm\theta_\alpha$ are the unique minimizers of $W_{\alpha}$.
\end{proof}

\begin{oss}
In the ferromagnetic regime $\alpha\geq4$, we may define $C_\alpha=0$ consistently with (\ref{crease}), where now $m_\alpha=-\alpha+1$. Indeed, being $\theta_\alpha=-\theta_\alpha=0$, we can choose $\theta\equiv0$ as a test function in (\ref{crease})  thus obtaining the estimate $C_\alpha\leq0$.
\end{oss}

It will be useful in the sequel the following continuity property of $C_\alpha$ with respect to the frustration parameter $\alpha$.

\begin{prop}[Continuity]
The crease energy $C_\alpha$ defined as before is continuous in $[0,4)$; i.e., for any $\bar{\alpha}\in[0,4)$ and any sequence $\alpha_j$ such that $0\leq\alpha_j<4$, $\alpha_j\to\bar{\alpha}$ it results $C_{\alpha_j}\to C_{\bar{\alpha}}$.
\label{continuity}
\end{prop}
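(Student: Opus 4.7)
The plan is to prove the two inequalities $\limsup_j C_{\alpha_j}\le C_{\bar{\alpha}}$ and $\liminf_j C_{\alpha_j}\ge C_{\bar{\alpha}}$ separately, exploiting the two equivalent formulations (\ref{crease}) and (\ref{crease2}) given by Proposition~\ref{propcrease}. For the upper bound, fix $\eta>0$ and use (\ref{crease}) to pick $N\in\N$ and $\theta:\Z\to[-\pi/2,\pi/2]$ satisfying $\theta^i=\text{sign}(i)\theta_{\bar{\alpha}}$ for $|i|\ge N$ whose crease energy at $\bar{\alpha}$ is at most $C_{\bar{\alpha}}+\eta$. For each $j$ I would define $\tilde{\theta}_j^i$ by keeping $\theta^i$ on $|i|<N$ and setting $\tilde{\theta}_j^i:=\text{sign}(i)\theta_{\alpha_j}$ for $|i|\ge N$. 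For $j$ large one has $\theta_{\alpha_j}\in[-\pi/2,\pi/2]$, so $\tilde{\theta}_j$ is admissible in (\ref{crease}) with the same $N$; its energy at $\alpha_j$ collapses to a finite sum of at most $2N+1$ summands, each a continuous function of $(\alpha_j,\theta_{\alpha_j},m_{\alpha_j})$. Since these three quantities depend continuously on $\alpha$ in $[0,4)$, the sum converges to the energy of $\theta$ at $\bar{\alpha}$, whence $\limsup_j C_{\alpha_j}\le C_{\bar{\alpha}}+\eta$; letting $\eta\to 0$ closes this direction.

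For the lower bound, I would pick near-minimizers $\theta_j:\Z\to[-\pi/2,\pi/2]$ in the relaxed formulation (\ref{crease2}) with $F_{\alpha_j}(\theta_j)\le C_{\alpha_j}+1/j$, writing $F_\alpha$ for the crease functional on $\Z$. Exploiting translation invariance, set $k_j:=\max\{i\in\Z:\theta_j^i\le 0\}$, which is finite because $\bar{\alpha}<4$ forces $\theta_{\alpha_j}>0$ for $j$ large and hence the two asymptotic values of $\theta_j$ have opposite signs; translate so that $k_j=0$, so that $\theta_j^0\le 0$ and $\theta_j^i>0$ for all $i\ge 1$. A diagonal extraction produces a subsequence with $\theta_j^i\to\theta^i$ pointwise in $i\in\Z$, and in particular $\theta^0\le 0$ and $\theta^i\ge 0$ for $i\ge 1$. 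Each summand $s_\alpha(a,b):=\cos(a+b)-(\alpha/2)(\cos a+\cos b)-m_\alpha$ is non-negative on $[-\pi/2,\pi/2]^2$ (the double-minimization behind (\ref{minprob}) yields $s_\alpha(a,b)\ge W_\alpha((a+b)/2)\ge 0$) and jointly continuous in $(\alpha,a,b)$, so Fatou's lemma gives $F_{\bar{\alpha}}(\theta)\le\liminf_j F_{\alpha_j}(\theta_j)\le\liminf_j C_{\alpha_j}$.

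It remains to check that $\theta$ is admissible in (\ref{crease2}), which is the main obstacle. Finiteness of $F_{\bar{\alpha}}(\theta)$ forces each summand to vanish at infinity, and since the zero set of $s_{\bar{\alpha}}$ on $[-\pi/2,\pi/2]^2$ is exactly $\{(\theta_{\bar{\alpha}},\theta_{\bar{\alpha}}),(-\theta_{\bar{\alpha}},-\theta_{\bar{\alpha}})\}$, there exist limits $\theta^i\to a_{\pm}\in\{\pm\theta_{\bar{\alpha}}\}$ as $i\to\pm\infty$; the normalisation immediately selects $a_+=+\theta_{\bar{\alpha}}$. To rule out $a_-=+\theta_{\bar{\alpha}}$ I would use a uniform tail estimate built on the observation that a near-minimizer $\theta_j$ can have only a single sign transition---two or more would cost at least $2C_{\alpha_j}$, incompatible with $F_{\alpha_j}(\theta_j)\le C_{\alpha_j}+1/j$ for $j$ large. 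Combined with the uniform bound $F_{\alpha_j}(\theta_j)\le C^*+1$ and the uniform strict positivity of $s_{\alpha_j}$ away from its zero set (for $j$ large), this confines the unique transition of $\theta_j$ to a bounded window around the normalised position $0$; outside this window $\theta_j^i$ is close to $\pm\theta_{\alpha_j}$ with the correct sign, uniformly in $j$, so passing to the pointwise limit gives $a_-=-\theta_{\bar{\alpha}}$. Once $\theta$ is admissible, $C_{\bar{\alpha}}\le F_{\bar{\alpha}}(\theta)\le\liminf_j C_{\alpha_j}$ closes the argument.
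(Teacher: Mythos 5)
Your upper bound is correct: modifying a near-minimizer of $C_{\bar{\alpha}}$ outside the finite window $\{|i|<N\}$ so that it equals $\text{sign}(i)\theta_{\alpha_j}$ there, and then letting the resulting finite sum converge term by term, is essentially the first half of the paper's argument (the paper works with \eqref{crease2} and pays an error $2|\alpha-\alpha'|\#K_{\bar h}+2\omega$-type terms; your use of \eqref{crease} makes the passage to the limit in $\alpha_j$ immediate). The lower bound is where you depart from the paper, which simply exchanges the roles of the two parameters in the same truncation argument, whereas you pass to a pointwise limit of near-minimizers and invoke Fatou. That route is legitimate, but the admissibility step contains a genuine gap.

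The configuration you must exclude is not one with ``two or more sign transitions'': it is a \emph{single} well-to-well transition drifting to $-\infty$ under your normalisation, accompanied by a short excursion of $\theta_j$ below $0$ near $i=0$. Recall that $k_j$ is defined by the sign of $\theta_j^i$, not by proximity to the wells, so $\theta_j^0\le 0$ is perfectly compatible with $\theta_j^i$ being close to $+\theta_{\alpha_j}$ on an arbitrarily long interval to the left of $0$, with the genuine transition sitting at $i\approx -R_j\to-\infty$. Such a competitor has only one transition, so the claim ``two or more would cost at least $2C_{\alpha_j}$'' --- which is in any case asserted without proof and would itself require the cut-and-paste argument you are trying to bypass --- does not apply to it, and your confinement conclusion does not follow as stated. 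What does exclude it is the following two-part estimate, which you should supply. (a) If $a_-=+\theta_{\bar{\alpha}}$, pick $i_0<0$ with $|\theta^{i_0}-\theta_{\bar{\alpha}}|<\epsilon$, so that $|\theta_j^{i_0}-\theta_{\alpha_j}|<2\epsilon$ for $j$ large; capping $\theta_j$ by the constant $\theta_{\alpha_j}$ for $i>i_0$ produces a competitor for \eqref{crease2} at $\alpha_j$, whence the tail $\sum_{i\le i_0-1}s_{\alpha_j}(\theta_j^i,\theta_j^{i+1})\ge C_{\alpha_j}-\omega(2\epsilon)$ with $\omega$ as in \eqref{function}. (b) The pair $(\theta_j^0,\theta_j^1)$ satisfies $\theta_j^0\le 0<\theta_j^1$, hence lies at distance at least $\theta_{\alpha_j}\ge\theta_{\bar{\alpha}}/2$ from both points $(\pm\theta_{\alpha_j},\pm\theta_{\alpha_j})$ of the zero set of $s_{\alpha_j}$, so $s_{\alpha_j}(\theta_j^0,\theta_j^1)\ge d>0$ uniformly for $j$ large. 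Summing the disjoint contributions (a) and (b) gives $F_{\alpha_j}(\theta_j)\ge C_{\alpha_j}-\omega(2\epsilon)+d$, contradicting $F_{\alpha_j}(\theta_j)\le C_{\alpha_j}+1/j$ once $\epsilon$ is small and $j$ is large; this is the estimate that actually pins the transition near $i=0$ and yields $a_-=-\theta_{\bar{\alpha}}$. With this repair your proof closes; without it, the identification of the limit at $-\infty$ is unjustified.
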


\begin{proof}
Let us fix $\eta>0$ and let $\alpha,\alpha'\in[0,4)$ be such that if $|\alpha-\alpha'|<\delta(\eta)$ for a suitable $\delta(\eta)>0$, then $|\theta_\alpha-\theta_{\alpha'}|<\eta/2$.\\
 From the definition of $C_{\alpha'}$ as in (\ref{crease2}), there exists a function $\theta:\Z\to[-\pi/2,\pi/2]$ such that $\displaystyle\sum_{i\in\Z}\mathcal{E}^{i,\alpha'}(\theta)<C_{\alpha'}+\eta$, where we have set
\begin{equation*}
\mathcal{E}^{i,\alpha'}(\theta):=-\frac{\alpha'}{2}\left(\cos\theta^i+\cos\theta^{i+1}\right)+\cos(\theta^i+\theta^{i+1})+\frac{(\alpha')^2}{8}+1,
\end{equation*}
and $\displaystyle\lim_{i\to\pm\infty}\text{sign}(i)\theta^i=\theta_{\alpha'}$.
This means that there exist two indices $h_1(\eta), h_2(\eta)\in \mathbb{N}$ such that
$|\theta^i-\theta_{\alpha'}|<\eta/2$ for every $i>h_2(\eta)$ and $|\theta^i-(-\theta_{\alpha'})|<\eta/2$ for every $i<-h_1(\eta)$. 

Setting $\bar{h}=\bar{h}(\eta):=\max\{h_1,h_2\}$ and $K_{\bar{h}}:=\{i\in \mathbb{Z}: |i|\leq\bar{h}\}$, we observe that for every $i\not\in K_{\bar{h}}$ it also holds that $|\text{sign}(i)\theta^i-\theta_{\alpha}|<\eta$.

Now we modify $\theta$ in order to obtain a test function for the problem defining $C_\alpha$ by setting
\begin{equation}
\tilde{\theta}^i=
\begin{cases}
\theta^i, & \text{ if } i\in K_{\bar{h}}\\
\text{sign}(i)\theta_{\alpha}, & \text{otherwise.}
\end{cases}
\end{equation}

We then have
\begin{equation}
\begin{split}
C_\alpha &\leq\displaystyle\sum_{i\in\Z}\mathcal{E}^{i,\alpha}(\tilde{\theta})
=\displaystyle\sum_{|i|<\bar{h}}\mathcal{E}^{i,\alpha}({\theta})+\mathcal{E}^{-\bar{h},\alpha}(\tilde{\theta})
+\mathcal{E}^{\bar{h},\alpha}(\tilde{\theta})\\
 &\leq\displaystyle\sum_{i\in\Z}\mathcal{E}^{i,\alpha'}(\theta) + 2|\alpha-\alpha'|\#K_{\bar{h}}+\left[\mathcal{E}^{-\bar{h},\alpha}(\tilde{\theta})-\mathcal{E}^{-\bar{h},\alpha'}(\theta)\right]\\
&+\left[\mathcal{E}^{\bar{h},\alpha}(\tilde{\theta})-\mathcal{E}^{\bar{h},\alpha'}(\theta)\right],
\end{split}
\label{stimaprincipale}
\end{equation}
where in the second inequality we used the estimate
\begin{align*}
\displaystyle\sum_{|i|<\bar{h}}\left|\mathcal{E}^{i,\alpha}({\theta})-\mathcal{E}^{i,\alpha'}({\theta})\right|\leq  2|\alpha-\alpha'|\#K_{\bar{h}}.
\end{align*}

Each of the last two terms in \eqref{stimaprincipale} can be estimated in the same way, so we make an explicit computation only for the latter. We have

\begin{align*}
&\left|\mathcal{E}^{\bar{h},\alpha}(\tilde{\theta})-\mathcal{E}^{\bar{h},\alpha'}(\theta)\right|\leq\left|(\cos(\theta^{\bar{h}}+\theta_{\alpha})-\cos(\theta^{\bar{h}}+\theta^{\bar{h}+1})\right|+\left|\frac{\alpha^2-(\alpha')^2}{8}\right|+\\
&\left|\frac{\alpha}{2}(\cos\theta^{\bar{h}}+\cos\theta_{\alpha})-\frac{\alpha'}{2}(\cos\theta^{\bar{h}}+\cos\theta^{\bar{h}+1})\right|\leq \eta+2|\alpha-\alpha'|.
\end{align*}
Collecting all the previous estimates and inserting them into \eqref{stimaprincipale} we obtain
\begin{align*}
C_\alpha&\leq C_{\alpha'} +\eta + 2|\alpha-\alpha'|\#K_{\bar{h}} + \eta+2|\alpha-\alpha'|\\
&\leq C_{\alpha'}+2\eta+2|\alpha-\alpha'|(1+\#K_{\bar{h}}).
\end{align*}
Choosing now $\gamma\geq 4\eta$ and $\alpha$ and $\alpha'$ such that
\begin{align*}
|\alpha-\alpha'|\leq \min\left\{\delta(\eta),\frac{\gamma}{4(1+\#K_{\bar{h}})}\right\}=:\sigma(\gamma,\eta),
\end{align*}
we finally obtain $C_\alpha\leq C_{\alpha'}+\gamma$.

If we change the role of $\alpha$ and $\alpha'$, we get an analogous estimate for $C_{\alpha'}$. Hence, we conclude that for every $\gamma\geq4\eta$, there exists $\sigma(\gamma,\eta)>0$ such that if $|\alpha-\alpha'|<\sigma(\gamma,\eta)$ then $|C_\alpha-C_{\alpha'}|\leq \gamma$. Since the choice of $\eta$ was arbitrary, the assertion immediately follows.
\end{proof}

\subsection{Compactness and $\Gamma$\hbox{-}convergence results}

The following compactness result states that sequences $\theta_n$ with equibounded energy $E_n^\alpha$ converge to a limit function $\theta$ which has a finite number of jumps and takes the values $\{\pm\theta_\alpha\}$ almost everywhere if $0\leq\alpha<4$, while if $\alpha\geq4$ the limit function is identically $0$. 

\begin{prop}[Compactness]\label{compact2}
Let $E_n^\alpha: L^\infty(I,J)\to[0,+\infty]$ be the energies defined by {\rm(\ref{exten})}. If $\{\theta_n\}$ is a sequence of functions such that
\begin{equation}
\sup_n E_n^{\alpha}(\theta_n)<+\infty,
\label{boundedness2}
\end{equation}
then we have two cases:\\
{\rm(i)} if $0\leq\alpha<4$ there exists a set $S\subset(0,1)$ with $\#S<+\infty$ such that, up to subsequences, $\theta_n$ converges to $\theta$ in $L^{1}_{loc}((0,1)\backslash S)$, where $\theta$ is a piecewise constant function and $\theta(0+)=\theta(1+)$. Moreover, $\theta(t)\in\{\pm\theta_\alpha\}$ for a.e. $t\in(0,1)$ and $S(\theta)\subseteq S$;\\
{\rm(ii)} if $\alpha\geq4$ then the limit function $\theta$ is identically 0.
\end{prop}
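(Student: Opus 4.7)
The plan is to exploit two ingredients already established in the paper: the lower bound $E_n^\alpha(\theta)\ge\sum_{i=0}^{n-1}W_\alpha(\theta^i)$ from (\ref{inequality}), which makes the double-well structure of $W_\alpha$ effective, and the characterisation of the crease energy $C_\alpha>0$ in Proposition~\ref{propcrease}, which quantifies the minimal cost of a single chirality transition. The equiboundedness (\ref{boundedness2}) gives at once the uniform estimate $\sum_{i=0}^{n-1}W_\alpha(\theta_n^i)\le C$.

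Case (ii) is essentially pointwise: on the compact set $J$ the potential $W_\alpha$ is continuous with a unique minimum at $0$, and the Taylor expansion of $W_\alpha$ at the origin yields a global lower bound $W_\alpha(\theta)\ge c|\theta|^p$ on $J$, with $p=2$ for $\alpha>4$ and $p=4$ for $\alpha=4$ (the latter because the quadratic term cancels at the transition point). Hence
\begin{equation*}
\int_I|\theta_n(t)|^p\,dt=\lambda_n\sum_{i=0}^{n-1}|\theta_n^i|^p\le\frac{C}{c\,n}\longrightarrow0,
\end{equation*}
so $\theta_n\to 0$ in $L^p(I)$ and hence in $L^1(I)$.

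For case (i) I would proceed in two steps. First, separate the indices into a \emph{chiral} and a \emph{bad} part: fix a small $\eta>0$ and set
\begin{equation*}
c_\eta:=\inf\{W_\alpha(\theta):\theta\in J,\,\dist(\theta,\{\pm\theta_\alpha\})>\eta\}>0,\qquad B_n^\eta:=\{i:\dist(\theta_n^i,\{\pm\theta_\alpha\})>\eta\}.
\end{equation*}
The bound on $\sum W_\alpha(\theta_n^i)$ forces $\#B_n^\eta\le C/c_\eta$ uniformly in $n$. For $i\notin B_n^\eta$, assign the label $\sigma_n^i:=\mathrm{sgn}(\theta_n^i)\in\{\pm1\}$, and call a \emph{transition} a maximal window of bad indices bracketed by chiral indices with opposite labels. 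The crucial second step is the sharp lower bound: each transition contributes at least $C_\alpha-\omega(\eta)$ to $E_n^\alpha(\theta_n)$, where $\omega(\eta)$ is the modulus from (\ref{function}). I would obtain this by isolating the transition window, replacing $\theta_n$ at the bracketing chiral indices by the exact values $\pm\theta_\alpha$ (the boundary perturbation being of order $\omega(\eta)$), and extending to a test sequence $\Z\to[-\pi/2,\pi/2]$ admissible in (\ref{crease2}). Since the local contributions $-\tfrac{\alpha}{2}(\cos\theta_n^i+\cos\theta_n^{i+1})+\cos(\theta_n^i+\theta_n^{i+1})-m_\alpha$ are non-negative (their minimum over pairs with fixed sum equals $W_\alpha\ge0$) and additive, summing over the $K_n$ transitions yields $K_n(C_\alpha-\omega(\eta))\le C$. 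For $\eta$ small enough this forces $K_n$ to be bounded uniformly in $n$. Along a subsequence, the positions $t_n^k\in(0,1)$ of the transition windows (together with those of the bad indices) accumulate on a finite set $S\subset(0,1)$, outside which $\theta_n^i$ is eventually trapped in a single $\eta$-neighbourhood of $\pm\theta_\alpha$; a diagonal argument in $\eta$ then yields $\theta_n\to\theta$ in $L^1_{\mathrm{loc}}((0,1)\setminus S)$ with $\theta\in\{\pm\theta_\alpha\}$ a.e.\ and $S(\theta)\subseteq S$. The matching $\theta(0+)=\theta(1+)$ is inherited from the periodic boundary condition $\theta_n^0=\theta_n^n$.

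The main obstacle is the surgery step, i.e., upgrading the qualitative fact "each transition has positive energy" to the quantitative lower bound $C_\alpha-\omega(\eta)$ uniform in $n$. This is precisely what Proposition~\ref{propcrease} enables: its reformulation of $C_\alpha$ as the infimum in (\ref{crease2}) under only an asymptotic boundary condition $\theta^i\to\pm\theta_\alpha$ means that the approximate values $|\theta_n^{i_\pm}-(\pm\theta_\alpha)|\le\eta$ at the brackets of each transition window are enough to furnish a legitimate competitor, up to the explicit error $\omega(\eta)$ exhibited in (\ref{function}). That error vanishes as $\eta\to 0^+$, closing the transition-counting argument uniformly.
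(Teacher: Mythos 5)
Your proof is correct and follows essentially the same strategy as the paper's: the energy bound confines all but a uniformly bounded number of sites to $\eta$-neighbourhoods of the wells, each chirality transition costs a uniformly positive amount of energy (you obtain the sharp constant $C_\alpha-\omega(\eta)$ via the crease-energy surgery of Proposition~\ref{propcrease}, where the paper settles for a cruder constant $C(\eta)>0$ coming from one high-energy pair per sign change), and extracting a subsequence along which the finitely many transition positions converge gives $L^1_{loc}$ convergence off a finite set $S$, with the periodic condition yielding $\theta(0+)=\theta(1+)$. Your case (ii), based on the lower bounds $W_\alpha(\theta)\geq c|\theta|^2$ for $\alpha>4$ and $W_\alpha(\theta)\geq c|\theta|^4$ at $\alpha=4$ on $J$, correctly supplies the argument the paper omits as requiring only ``minor changes''.
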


\begin{proof}
(i) We first note that $-\alpha\cos\theta\geq |p_\alpha\theta|-\alpha-1$ for $\theta\in[-\pi/2,\pi/2]$ and a constant $p_\alpha$ depending on $\alpha$, so that
\begin{equation*}
\begin{split}
C>C\lambda_n\geq\lambda_n E_n^{\alpha}(\theta_n)\geq& |p_\alpha|\sum_{i=0}^{n-1}\lambda_n|\theta_n^{i}|-(\alpha+1)n\lambda_n+\lambda_n-n\lambda_n+\frac{\alpha^2}{8}+1\\
\geq&|p_\alpha|\sum_{i=0}^{n-1}\lambda_n|\theta_n^{i}|-\alpha-1,
\end{split}
\end{equation*}
from which we deduce that
\begin{equation}
\int_0^1|\theta_n(t)|\,dt<+\infty.
\label{bound}
\end{equation}
From the equiboundedness assumption (\ref{boundedness2}) there exists a constant $C>0$ such that
\begin{equation}
\sup_{n}\sum_{i=0}^{n-1}\mathcal{E}_n^i(\theta_n)\leq C<+\infty,
\label{estimate2}
\end{equation}
where we have set
\begin{equation}
\begin{split}
\mathcal{E}_n^i(\theta_n)&=\cos(\theta_n^{i}+\theta_n^{i+1})-\frac{\alpha}{2}(\cos\theta_n^{i}+\cos\theta_n^{i+1})-m_\alpha.
\end{split}
\label{simple}
\end{equation}
Now, if for every fixed $\eta>0$ we define 
\begin{equation*}
I_n(\eta):=\{i\in\{0,1,\dots,n-1\}:\,\mathcal{E}_n^i(\theta_n)>\eta\}, 
\end{equation*}
then (\ref{estimate2}) implies the existence of a uniform constant $C(\eta)$ such that
\begin{equation}
\sup_n\#I_n(\eta)\leq C(\eta)<+\infty.
\end{equation}
Let $i\in\{0,1,\dots,n-1\}$ be such that $i\not\in I_n(\eta)$; that is,
\begin{equation*}
\mathcal{E}_n^i(\theta_n)=\cos(\theta_n^{i}+\theta_n^{i+1})-\frac{\alpha}{2}(\cos\theta_n^{i}+\cos\theta_n^{i+1})-m_\alpha\leq\eta.
\label{stim1}
\end{equation*}
Let $\sigma=\sigma(\eta)>0$ be defined such that if
\begin{equation*}
0\leq\cos(\theta_1+\theta_2)-\frac{\alpha}{2}(\cos \theta_1+\cos \theta_2)-m_\alpha\leq\eta,\quad \theta_1+\theta_2=2\theta,\,\theta\in\{\pm\theta_\alpha\},
\end{equation*}
then
\begin{equation*}
|\theta_1-\theta|+|\theta_2-\theta|\leq \sigma(\eta).
\end{equation*}
As a consequence, if $i\not\in I_n(\eta)$ we deduce the existence of $\theta\in\{\pm\theta_\alpha\}$ such that
\begin{equation*}
|\theta_n^{i}-\theta|\leq\sigma\text{ and }|\theta_n^{i+1}-\theta|\leq\sigma.
\end{equation*}

Hence, up to a finite number of indices $i$, both $\theta_n^i$ and its nearest neighbour $\theta_n^{i+1}$ are close to the same minimal angle $\pm\theta_\alpha$. Namely, there exists a finite number of indices $0=i_0<i_1<\dots<i_{N_n}=n-1$ such that for all $k=1,2,\dots,N_n$ we can find $\theta_{k,n}\in\{\pm\theta_\alpha\}$ satisfying for all $i\in\{i_{k-1}+1,i_{k-1}+2,\dots,i_{k}-1\}$ the aforementioned closeness property
\begin{equation}
|\theta_n^{i}-\theta_{k,n}|\leq\sigma\,\text{ and }\,|\theta_n^{i+1}-\theta_{k,n}|\leq\sigma.
\label{3.8bis}
\end{equation}
Now, let $\{i_{j_r}\}$, $r=1,\dots, M_n$ be the maximal subset of $0=i_0<i_1<\dots<i_{N_n}=n-1$ defined by the requirement that if $\theta_{j_r,n}=\pm\theta_\alpha$ then $\theta_{j_{r+1},n}=\mp\theta_\alpha$; this means that $\{i_{j_1},\dots,i_{j_{M_n}}\}\subseteq I_n(\eta)$.  Hence, there exists $C(\eta)>0$ such that $\sum_{i=0}^{n-1}\mathcal{E}^i_n(\theta_n)\geq C(\eta)M_n$ and then $E_n^{\alpha}(\theta_n)\geq C(\eta)M_n$, so that from (\ref{boundedness2}) $M_n$ are equibounded. Thus, up to further subsequences, we can assume that $M_n=M$ and that for every $r=1,\dots,M$, $t_{i_{j_r}}^n=\lambda_ni_{j_r}\to t_r$ for some $t_r\in[0,1]$ and $\theta_{j_r,n}=\theta_{r}$. Set $S=\{t_1,\dots t_M\}$ and, for fixed $\delta>0$, $S_\delta=\bigcup_r(t_r-\delta,t_r+\delta)$. Then, by identifying $\theta_n$ with its piecewise constant interpolation, from (\ref{3.8bis}) and for $n$ large enough we get
\begin{equation*}
\sup_{t\in(0,1)\backslash S_\delta}|\theta_n(t)-\theta_r|\leq\sigma.
\end{equation*}
The previous estimates, together with (\ref{bound}) ensure that $\{\theta_n\}$ is an equicontinuous and equibounded sequence in $(0,1)\backslash S_\delta$. Thus, thanks to the arbitrariness of $\delta$, up to passing to a further subsequence (not relabelled), $\theta_n$ converges in $L^{\infty}_{loc}((0,1)\backslash S)$ (and in $L^{1}_{loc}((0,1)\backslash S)$) to a function $\theta$ such that $\theta(t)\in\{\pm\theta_\alpha\}$ for a.e. $t\in(0,1)$. Moreover, $S(\theta)\subseteq S$. Finally, by the periodicity assumption (\ref{periodic}), we have $\theta_n^0=\theta_n^n$ from which passing to the limit as $n\to\infty$ we conclude that $\theta(0+)=\theta(1+)$.\\
(ii) The proof of (ii) requires minor changes in the argument above, so we will omit it.
\end{proof}

Now we can state and prove the $\Gamma\hbox{-}$convergence result.

\begin{thm}
{\rm(i)} Let $\alpha\in[0,4)$. Then $E_n^{\alpha}$  $\Gamma\hbox{-}$converges with respect to the $L^1_{loc}\hbox{-}$topology to
\begin{equation}
E^{\alpha}(\theta)=
\begin{cases}
C_\alpha\#(S(\theta)\cap[0,1)), & \text{if }\theta\in PC_{loc}(\R),\,\theta\in\{\pm\theta_\alpha\}\\
& \text{$\theta$ is 1-periodic}\\
+\infty, & \text{otherwise}
\end{cases}
\end{equation}
on $L^{1}_{loc}(\R)$, where $C_\alpha=C(-\theta_\alpha,\theta_\alpha)$ is given by {\rm(\ref{crease})} and $PC_{loc}(\R)$ denotes the space of locally piecewise constant functions on $\R$.\\
{\rm(ii)} Let $\alpha\in[4,+\infty)$. Then $E_n^{\alpha}$  $\Gamma\hbox{-}$converges with respect to the $L^1_{loc}\hbox{-}$topology to
\begin{equation}
E^{\alpha}(\theta)=
\begin{cases}
0, & \text{if }\theta=0\\
+\infty, & \text{otherwise}
\end{cases}
\end{equation}
on $L^{1}_{loc}(\R)$.
\label{gammalim}
\end{thm}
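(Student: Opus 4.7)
The plan is to treat the two cases separately. Case (ii) is straightforward: by Proposition~\ref{compact2}(ii) any sequence with equibounded energy converges to $0$, so the $\Gamma$\hbox{-}liminf is $+\infty$ unless $\theta\equiv 0$; the constant sequence $\theta_n\equiv 0$ is a recovery sequence since, for $\alpha\geq 4$, $m_\alpha=-\alpha+1$ gives $E_n^\alpha(0)=0$. For case (i) I would follow the sharp-interface strategy that is by now standard for this class of discrete problems: localize the energy around the finitely many jump points detected by the compactness result, compare the local energy with the discrete crease problem defining $C_\alpha$ for the liminf, and build a recovery sequence by pasting a quasi-optimal crease profile at each jump for the limsup.

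For the $\Gamma$\hbox{-}liminf in (i), let $\theta_n\to\theta$ in $L^1_{loc}$ with $\sup_n E_n^\alpha(\theta_n)<+\infty$. Proposition~\ref{compact2}(i) gives $\theta\in\{\pm\theta_\alpha\}$ a.e.\ with $S(\theta)=\{t_1,\dots,t_M\}\subseteq S$, and more importantly supplies, for every $\eta>0$, a finite set of ``bad'' indices $I_n(\eta)$ outside of which $|\theta_n^i\mp\theta_\alpha|\leq\sigma(\eta)$ for the correct sign. For a small $\delta>0$ making the intervals $(t_k-\delta,t_k+\delta)$ disjoint, I would pick indices $i_k^-<i_k^+$ with $\lambda_n i_k^\pm\to t_k\pm\delta$ and such that $\theta_n^{i_k^-}$ and $\theta_n^{i_k^+}$ are as close as we wish to the appropriate minimal angle. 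Extending the finite string $(\theta_n^i)_{i_k^-\leq i\leq i_k^+}$ by the corresponding constants $\pm\theta_\alpha$ outside produces an admissible test function for problem (\ref{crease2}) at jump $t_k$, up to an error of the size $2\omega(\eta)$ (with $\omega$ as in (\ref{function})) coming from the two junctions. Summing over $k$, using the non\-negativity of $\mathcal{E}_n^i(\theta_n)$ to discard the interaction between distinct jump zones, gives
\begin{equation*}
\liminf_{n\to\infty}E_n^\alpha(\theta_n)\;\geq\;\sum_{k=1}^{M}\bigl(C_\alpha-2\omega(\eta)\bigr)\;=\;C_\alpha\,\#(S(\theta))-2M\omega(\eta),
\end{equation*}
and then $\eta\to 0$ concludes by Proposition~\ref{propcrease}.

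For the $\Gamma$\hbox{-}limsup in (i), given $\theta\in PC_{loc}(\R)$, $1$-periodic with values in $\{\pm\theta_\alpha\}$ and jumps at $t_1<\dots<t_M$ in $[0,1)$, I would fix $\varepsilon>0$ and use Proposition~\ref{propcrease} to select a near optimal profile $\bar\theta_\varepsilon:\Z\to[-\pi/2,\pi/2]$, eventually equal to $\pm\theta_\alpha$ outside some $[-N_\varepsilon,N_\varepsilon]$, with total energy at most $C_\alpha+\varepsilon/M$; the symmetry (\ref{symmetry}) ensures that its reflection handles the opposite chirality. For $n$ so large that $2N_\varepsilon\lambda_n<\min_k|t_{k+1}-t_k|$, I would paste translated copies of $\bar\theta_\varepsilon$ (or its reflection) into the lattice around each $\lfloor nt_k\rfloor$ and set $\theta_n^i=\pm\theta_\alpha$ between the copies, in accordance with $\theta$. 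The periodicity $\theta_n^0=\theta_n^n$ is preserved because the number $M$ of chirality transitions is necessarily even on a periodic cell. Since the non\-negative contribution of every lattice site outside these $M$ windows is $0$, one gets $E_n^\alpha(\theta_n)\leq M(C_\alpha+\varepsilon/M)=C_\alpha\,\#(S(\theta))+\varepsilon$, and $\theta_n\to\theta$ in $L^1_{loc}$; a standard diagonal extraction in $\varepsilon$ yields the recovery sequence.

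The main difficulty I anticipate is in the liminf: making precise the statement that the ``decoupled'' local competitors obtained by extending $\theta_n$ by constants at $i_k^\pm$ reproduce $C_\alpha$ up to vanishing errors, and in particular controlling the mismatch term at the two artificial boundaries uniformly in $n$. This is essentially the same argument used in Proposition~\ref{propcrease}, but applied in the opposite direction and to sequences rather than a fixed profile, so the modulus $\omega(\eta)$ from (\ref{function}) and the closeness estimate (\ref{3.8bis}) from the compactness proof must be combined with some care. Once this localization is set up properly, the rest of the proof is book-keeping.
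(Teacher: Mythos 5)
Your proposal is correct and follows essentially the same strategy as the paper: compactness (Proposition~\ref{compact2}) to identify the piecewise-constant structure of the limit, localization of the energy around each jump with extension by the constants $\pm\theta_\alpha$ to produce admissible competitors for the crease problem (\ref{crease})--(\ref{crease2}) with a junction error controlled by the modulus $\omega$ from (\ref{function}), and a recovery sequence built by pasting translated near-optimal profiles at each jump. The only differences are bookkeeping ones (you cut at $t_k\pm\delta$ where the paper cuts at midpoints between consecutive jumps), and the difficulty you flag at the artificial boundaries is resolved in the paper exactly as you anticipate, via the estimate $\geq C(\bar\theta_j,\bar\theta_{j+1})-\omega(\sigma)$ in (\ref{stimaliminf}).
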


\begin{proof}
(i) {\bf Liminf inequality.} We may assume, without loss of generality, that $\theta$ is left-continuous at each jump. Let $\theta_n\to\theta$ in $L^1(0,1)$ be such that $E_n^{\alpha}(\theta_n)<+\infty$. Then, from Proposition~\ref{compact2} there exist $N\in\N$, $\bar{\theta}_1,\dots,\bar{\theta}_N\in\{\pm\theta_\alpha\}$ and $0=s_0<s_1<\dots<s_N=1$, $\{s_j\}=\{t_k\}$ (the set of indices may be different if $s_k=s_{k+1}$ for some $k$) such that
\begin{equation}
\theta^n_{j_k}\to\bar{\theta}_j, \quad \text{on the interval }(s_{j-1},s_j),\, j\in\{1,\dots,N\}.
\label{3.11bis}
\end{equation}
For $l\in\{0,1,\dots,N\}$, let $\{k_n^l\}_n$ be a sequence of indices such that $k_n^0=0$,
\begin{equation*}
\lim_n\lambda_nk_n^l={s_l},
\end{equation*}
and let $\{h_n^l\}_n$ be another sequence of indices such that $h_n^0=0$,
\begin{equation*}
\lim_n\lambda_nh_n^l=\frac{s_l+s_{l-1}}{2}.
\end{equation*}
To get the $\Gamma$\hbox{-}liminf inequality, we rewrite the energy as follows:
\begin{equation}
E_n^\alpha(\theta_n)=\sum_{j=1}^{N-1}E_n^\alpha(\theta_n, h_n^j, h_n^{j+1})+r_n,
\label{otherenergy}
\end{equation}
where we have set
\begin{equation*}
E_n^\alpha(\theta_n, h_n^j, h_n^{j+1})=\sum_{i=h_n^j}^{h_n^{j+1}-1}\Bigl[\cos(\theta_n^{i}+\theta_n^{i+1})-\frac{\alpha}{2}(\cos\theta_n^{i}+\cos\theta_n^{i+1})-m_\alpha\Bigr],
\end{equation*}
$m_\alpha=-\frac{\alpha^2}{8}-1$ and
\begin{equation*}
r_n=\sum_{i=0}^{h_n^1-1}\mathcal{E}^i_n(\theta_n)+\sum_{i=h^{N}_n+1}^{n-1}\mathcal{E}^i_n(\theta_n),
\end{equation*}
with $r_n>0$ and $\mathcal{E}^i_n(\theta_n)$ as in (\ref{simple}). Defining for $j\in\{1,2,\dots,N-1\}$
\begin{equation}
\tilde{\theta}_n^i=
\begin{cases}
\bar{\theta}_j, & \text{if }i\leq h_n^j-k_n^j-1,\\
\theta_n^{i+k_n^j}, & \text{if }h_n^j-k_n^j\leq i\leq h_n^{j+1}-k_n^j-1,\\
\bar{\theta}_{j+1}, & \text{if }i\geq h_n^{j+1}-k_n^j,
\end{cases}
\end{equation}
we have that $\tilde{\theta}_n^i$ is a test function for the minimum problem defining $C(\theta(s_j-),\theta(s_j+))$ as in (\ref{crease}), where $\theta(s_j-)=\bar{\theta}_j$ and $\theta(s_j+)=\bar{\theta}_{j+1}$. 

For $n$ large enough and any $\sigma>0$, we then find that each summand in (\ref{otherenergy}) can be estimated from below as
\begin{equation}
\begin{split}
E_n^\alpha(\theta_n, k_n^j, k_n^{j+1})&=\sum_{i=h_n^j}^{k_n^j}\mathcal{E}^i_n(\theta_n)+\sum_{i=k^{j}_n}^{h_n^{j+1}-1}\mathcal{E}^i_n(\theta_n)\\
&=\sum_{l=h_n^j-k_n^j}^{0}\mathcal{E}^{l+k_n^j}_n(\theta_n)+\sum_{l=1}^{h_n^{j+1}-k_n^j-1}\mathcal{E}^{l+k_n^j}_n(\theta_n)\\
&=\sum_{l=h_n^j-k_n^j}^{h_n^{j+1}-k_n^j-1}\mathcal{E}^{l}_n(\tilde{\theta}_n)\\
&=\sum_{l\in\Z}\mathcal{E}^{l}_n(\tilde{\theta}_n)-\sum_{l\leq h_n^j-k_n^j-1}\mathcal{E}^{l}_n(\tilde{\theta}_n)-\sum_{l\geq h_n^{j+1}-k_n^j}\mathcal{E}^{l}_n(\tilde{\theta}_n)\\
&=\sum_{l\in\Z}\mathcal{E}^{l}_n(\tilde{\theta}_n)-\mathcal{E}^{h_n^j - k_n^j - 2}_n(\tilde{\theta}_n)\\
&\geq\sum_{i\in\Z}\Bigl[\cos(\tilde{\theta}_{n}^{i}+\tilde{\theta}_{n}^{i+1})-\frac{\alpha}{2}(\cos\tilde{\theta}_{n}^{i}+\cos\tilde{\theta}_{n}^{i+1})-m_\alpha\Bigr]-\omega(\sigma)\\
&\geq C(\bar{\theta}_j,\bar{\theta}_{j+1})-\omega(\sigma),
\end{split}
\label{stimaliminf}
\end{equation}
where $\omega:[0,+\infty)\to[0,+\infty)$ is a suitable continuous function, $\omega(0)=0$. Finally, since for every $j\in\{1,\dots,N-1\}$ it holds $C(\bar{\theta}_j,\bar{\theta}_{j+1})=C_\alpha$ by (\ref{symmetry}), combining (\ref{stimaliminf}) with (\ref{otherenergy}) and passing to the liminf as $n\to+\infty$ we get the liminf inequality 
\begin{equation}
\begin{split}
\mathop{\lim\inf}_{n\to\infty}E_n^\alpha(\theta_n)&\geq\mathop{\lim\inf}_{n\to\infty}\Bigl[(N-1)C_\alpha-(N-1)\omega(\sigma)\Bigr]\\
&=C_\alpha\#(S(\theta)\cap[0,1)),
\end{split}
\label{inliminf}
\end{equation}
where the latter equality follows by the arbitrariness of $\sigma$.
\\
{\bf Limsup inequality.} Let $\theta$ be such that $E^{\alpha}(\theta)<+\infty$. Then there exist $M\in\N$, $\bar{\theta}_1,\dots,\bar{\theta}_M\in\{\pm\theta_\alpha\}$ and $0=t_0<t_1<\dots<t_M=1$ such that $\#S(\theta)=M-1$ and
\begin{equation}
\theta(t)=\bar{\theta}_j, \quad t\in(t_{j-1},t_{j}),\,j\in\{1,2,\dots,M\}.
\label{3.19bis}
\end{equation}
Fixed $\eta>0$, from the definition of $C(\theta(t_j-),\theta(t_j+))$ for $j\in\{1,2,\dots,M-1\}$ we can find functions $\psi_{j,j+1}:\Z\to [-\pi/2,\pi/2]$, such that
\begin{equation}
\psi_{j,j+1}^i=
\begin{cases}
\bar{\theta}_j & \text{for }i\leq-N_j,\\
\bar{\theta}_{j+1} & \text{for }i\geq N_j,
\end{cases}
\label{testpsi}
\end{equation}
and 
\begin{equation}
\begin{split}
&\sum_{i\in\Z}\Bigl[\cos(\psi_{j,j+1}^{i}+\psi_{j,j+1}^{i+1})-\frac{\alpha}{2}(\cos\psi_{j,j+1}^{i}+\cos\psi_{j,j+1}^{i+1})-m_\alpha\Bigr]\\
&\leq C(\theta(t_j-),\theta(t_j+))+\eta=C_\alpha+\eta,
\end{split}
\label{contribution}
\end{equation}
where the latter equality follows again by (\ref{symmetry}).
Note that in (\ref{testpsi}) we may assume $N=N_j$ independent of $j$, up to choose $N=\displaystyle\max_{1\leq j\leq M-1}\{N_j\}$.

We define a recovery sequence $\tilde{\theta}_n$ by means of a translation argument involving functions $\psi_{j,j+1}$, $j\in\{1,\dots,M-1\}$, that will allow us to estimate the energy contribution from above with (\ref{contribution}) in a suitable neighbourhood of each jump point $t_j$. Namely, for every $j$, we set $\tilde{\theta}_n^i=\psi_{j,j+1}^{i-\lfloor t_jn\rfloor}$ if $i\in\left\{\lfloor t_jn\rfloor-N,\dots,\lfloor t_jn\rfloor+N\right\}$, while if $i\in \{\lfloor t_jn\rfloor+N,\dots,\lfloor t_{j+1}n\rfloor-N\}$, we define $\tilde{\theta}_n^i$ to be constantly equal to $\bar{\theta}_{j+1}$, according to (\ref{testpsi}). 
This definition can be summarized as follows:
\begin{equation}
\tilde{\theta}_n^i=
\begin{cases}
\bar{\theta}_1 & \text{if }0\leq i\leq \lfloor t_1n\rfloor-N\\
\psi_{j,j+1}^{i-\lfloor t_jn\rfloor}& \text{if }\lfloor t_jn\rfloor-N\leq i\leq \lfloor t_{j}n\rfloor+N,\\
\bar{\theta}_{j+1} & \text{if }\lfloor t_jn\rfloor+N\leq i\leq\lfloor t_{j+1}n\rfloor-N, \quad j\in\{1,\dots,M-1\}\\
\bar{\theta}_M & \text{if }n-N\leq i \leq n-1.
\end{cases}
\end{equation}
We note that the corresponding $\tilde{\theta}_n\in\mathcal{D}_n(I)$ satisfy $\tilde{\theta}_n\to\theta$ in $L^\infty$ and (here we use the simplified notation for the energies as in (\ref{simple}))
\begin{equation*}
\begin{split}
E_n^\alpha(\tilde{\theta}_n)=\sum_{i=0}^{n-1}\mathcal{E}_n^i(\tilde{\theta}_n)&=\sum_{i=0}^{\lfloor t_1n\rfloor-N-1}\mathcal{E}_n^i(\tilde{\theta}_n)+\sum_{j=1}^{M-1}\left(\sum_{i=\lfloor t_jn\rfloor-N}^{\lfloor t_jn\rfloor+N-1}\mathcal{E}_n^i(\tilde{\theta}_n)\right)\\
&+\sum_{j=1}^{M-1}\left(\sum_{i=\lfloor t_jn\rfloor+N}^{\lfloor t_{j+1}n\rfloor-N-1}\mathcal{E}_n^i(\tilde{\theta}_n)\right)+\sum_{i=n-N}^{n-1}\mathcal{E}_n^i(\tilde{\theta}_n)\\
&=\sum_{j=1}^{M-1}\left(\sum_{i=\lfloor t_jn\rfloor-N}^{\lfloor t_jn\rfloor+N-1}\mathcal{E}_n^i(\psi_{j,j+1}^{i-\lfloor t_jn\rfloor})\right)=\sum_{j=1}^{M-1}\sum_{i\in\Z}\mathcal{E}_n^i(\psi_{j,j+1}^i)\\
&\leq (M-1)(C_\alpha+\eta),
\end{split}
\end{equation*}
whence, by the arbitrariness of $\eta$, we deduce that
\begin{equation}
\mathop{\lim\sup}_{n\to+\infty}E_n^\alpha(\tilde{\theta}_n)\leq (M-1)(C_\alpha+\eta)=C_\alpha\#(S(\theta)\cap[0,1)).
\label{inlimsup}
\end{equation}
Thus, (\ref{inlimsup}) shows that the lower bound (\ref{inliminf}) is sharp, and this concludes the proof of (i).\\
(ii) In this case the proof is immediate. Indeed, for any $\theta_n\to0$, from $E_n^\alpha(\theta_n)\geq0$ we have in particular that 
\begin{equation*}
\mathop{\lim\inf}_{n\to+\infty}E_n^\alpha(\theta_n)\geq0.
\end{equation*}
As a recovery sequence, we can choose $\theta_n\equiv0$, for which we obtain $\displaystyle\lim_{n\to+\infty}E_n^\alpha(\theta_n)=0$.
\end{proof}

\section{Limit behaviour near the transition point $\alpha=4$}

The description of the limit as $n\to+\infty$ of the energies $E_n^\alpha$ with fixed $\alpha$, carried out in the previous section, has a gap for $\alpha=4$. Indeed, the crease energy $C_\alpha$ jumps from a strictly positive value (corresponding to $0\leq\alpha<4$) to $0$ (when $\alpha\geq4$). Note also that the explicit value of $C_\alpha$ defined implicitly by (\ref{crease}) is not known in literature. This suggests to focus near the transition point $\alpha=4$, let the parameter $\alpha$ depend on $n$ and be close to $4$ from below; that is, replace $\alpha$ by $4-\alpha_n$, $\alpha_n\to4^-$. 

Such analysis is the main content of a recent paper by Cicalese and Solombrino~\cite{CS}, where they find suitable scaling and order parameter to compute the energy the system spends in a transition between two states with different chirality when $\alpha\simeq4$. Moreover, they show the dependence of the limit on the particular sequence $\alpha_n\to4^-$ and the existence of different regimes.
Our aim is to show that their result can be retrieved also correspondingly to a different choice of the order parameter in the energies. 
First of all, we write the energies (\ref{exten}) in terms of $4-\alpha$ as
\begin{equation}
E_n^{\alpha}(\theta)=[4-(4-\alpha)]\sum_{i=0}^{n-1}(1-\cos\theta^i)-\sum_{i=0}^{n-1}[1-\cos(\theta^i+\theta^{i+1})]+n\frac{(4-\alpha)^2}{8},
\label{energy3}
\end{equation}
and when necessary, we may think also the quantities $W_\alpha$, $C_\alpha$, etc. to be functions of $4-\alpha$. 
Note that if we choose as a test function in (\ref{crease}) $\theta^{i,\alpha}=\text{sign}(i)\arccos(\alpha/4)$ then we obtain a first rough estimate
\begin{equation*}
0<C_{\alpha}\leq (4-\alpha)-\frac{(4-\alpha)^2}{8},
\end{equation*}
showing in particular that $C_{\alpha}\to0$ as $\alpha\to4^-$.

The following proposition (compare with \cite[Proposition~4.3]{CS}) characterizes the angles between neighbours for an equibounded (in energy) sequence of spins as the frustration parameter approaches the critical value from below.

\begin{prop}
If $\{\theta_n\}$ is a sequence such that

\begin{equation}
\sup_nE_n^{\alpha_n}(\theta_n)\leq C(4-\alpha_n)^{3/2},
\end{equation}
\\
then $\theta_n^i\to0$ as $\alpha_n\to4^-$ uniformly with respect to $i\in\{0,1,\dots,n-1\}$.
\label{compact}
\end{prop}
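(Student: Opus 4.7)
The plan is to exploit inequality (\ref{inequality}), which transfers the energy bound into a pointwise bound on the effective potential $W_{\alpha_n}$ evaluated at each $\theta_n^i$, and then to recognize that $W_\alpha$ has a particularly clean factorization in terms of $\cos\theta$.

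First I would rewrite the effective potential in a more convenient form. Using $m_\alpha = -\alpha^2/8 - 1$ and the identity $\cos 2\theta = 2\cos^2\theta - 1$, a direct computation gives
\begin{equation*}
W_\alpha(\theta) = \cos 2\theta - \alpha\cos\theta + \frac{\alpha^2}{8} + 1 = 2\Bigl(\cos\theta - \frac{\alpha}{4}\Bigr)^{\!2}.
\end{equation*}
This is the key algebraic observation: the effective potential is a perfect square in the variable $\cos\theta$, with its zero exactly at $\cos\theta = \alpha/4$, consistently with the characterization $\theta_\alpha = \arccos(\alpha/4)$.

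Next I would combine this identity with (\ref{inequality}) and the hypothesis on the energy, obtaining for every index $i$
\begin{equation*}
2\Bigl(\cos\theta_n^i - \frac{\alpha_n}{4}\Bigr)^{\!2} \leq \sum_{j=0}^{n-1} W_{\alpha_n}(\theta_n^j) \leq E_n^{\alpha_n}(\theta_n) \leq C(4-\alpha_n)^{3/2},
\end{equation*}
which yields the uniform estimate $|\cos\theta_n^i - \alpha_n/4| \leq \sqrt{C/2}\,(4-\alpha_n)^{3/4}$ for all $i \in \{0,1,\dots,n-1\}$.

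Finally, letting $\alpha_n \to 4^-$, the right-hand side tends to $0$ while $\alpha_n/4 \to 1$, so $\cos\theta_n^i \to 1$ uniformly in $i$. Since each $\theta_n^i$ lies in the interval $[-\pi/2,\pi/2]$, on which the cosine is strictly decreasing in $|\theta|$ and continuous, the conclusion $\theta_n^i \to 0$ uniformly follows immediately. There is no real obstacle here once the factorization of $W_\alpha$ has been noticed; the only mild care to take is tracking that the estimate is uniform in $i$, which is guaranteed because the bound on the sum dominates each single term.
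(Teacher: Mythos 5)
Your proof is correct and follows essentially the same route as the paper: the paper's argument is precisely the chain $0\leq 2\bigl(\cos\theta_n^i-\tfrac{\alpha_n}{4}\bigr)^2\leq\sum_i W_{\alpha_n}(\theta_n^i)\leq E_n^{\alpha_n}(\theta_n)\leq C(4-\alpha_n)^{3/2}$, which rests on the same perfect-square factorization of $W_\alpha$ that you make explicit. You merely spell out the details (the identity $W_\alpha(\theta)=2(\cos\theta-\alpha/4)^2$ and the passage from $\cos\theta_n^i\to1$ to $\theta_n^i\to0$ on $[-\pi/2,\pi/2]$) that the paper leaves implicit.
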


\begin{proof}
The claim follows immediately from the estimate
\begin{equation}
0\leq2\Bigl(\cos\theta_n^i-\frac{\alpha_n}{4}\Bigr)^2\leq\sum_{i=0}^{n-1}W_{\alpha_n}(\theta_n^i)\leq E_n^{\alpha_n}(\theta_n)\leq C(4-\alpha_n)^{3/2}
\end{equation}
valid for all $i\in\{0,1,\dots,n-1\}$.
\end{proof}

We introduce a new order parameter
\begin{equation}
v^i_n=\frac{\theta^i_n}{\theta_{\alpha_{n}}}
\label{newparameter}
\end{equation}
and reformulate the $\Gamma\hbox{-}$convergence result by Cicalese and Solombrino (\cite[Theorem~4.2]{CS}) in terms of this new variable. 
However, it is worth noting that the ``flat'' angular parameter $v_n^i$ is equivalent with their variable $z^i_n$ in the regime of ``small angles'', i.e., as $\alpha_n\to4^-$, $\theta^i_n\to0$, since in this case
\begin{equation*}
\theta_{\alpha_n}=\arccos\left(1-\frac{(4-\alpha_n)}{4}\right)\simeq\frac{\sqrt{4-\alpha_n}}{\sqrt{2}}
\end{equation*}
and
\begin{equation*}
z^i_n=\frac{2\sqrt{2}}{\sqrt{4-\alpha_n}}\sin\left(\frac{\theta^i_n}{2}\right)\simeq\frac{\sqrt{2}\theta^i_n}{\sqrt{4-\alpha_n}}.
\end{equation*}

The change of variables (\ref{newparameter}) associates to any given $\theta_n\in \mathcal{D}_n(I)$ a piecewise-constant function $v_n\in\widetilde{\mathcal{D}}_n(I)$ where
\begin{equation*}
\widetilde{\mathcal{D}}_n(I):=\Bigl\{v:[0,1)\to\R:\, v(t)=v^i_n\text{ if }t\in\lambda_n(i+[0,1)),\, i\in\{0,1,\dots,n-1\}\Bigr\},
\end{equation*}
with $v_n$ as in (\ref{newparameter}). With a slight abuse of notation, we regard $E_n^{\alpha_n}$ as a functional defined on $v\in L^1(I,\R)$ by
\begin{equation}
E_n^{\alpha_n}(v)=
\begin{cases}
E_n^{\alpha_n}(\theta), & \text{if }v\in\widetilde{\mathcal{D}}_n(I)\\
+\infty, &\text{otherwise},
\end{cases}
\label{abusenergy}
\end{equation}
and correspondingly we define the scaled energies
\begin{equation}
F_n^{\alpha_n}(v):=\frac{8E_n^{\alpha_n}(v)}{\sqrt{2}{(4-\alpha_n)^{3/2}}}.
\label{abusenergy2}
\end{equation}

\begin{thm}[Cicalese and Solombrino~\cite{CS}]
Let $F_n^{\alpha_n}: L^1(I,\R)\to[0,+\infty]$ be the functional in {\rm(\ref{abusenergy2})}. Assume that there exists $l:=\lim_n \sqrt{2}\lambda_n/4(4-\alpha_n)^{1/2}$. Then $F^0(v):=\displaystyle\Gamma\hbox{-}\lim_nF_n^{\alpha_n}(v)$ with respect to the $L^1(I)$ convergence is given by:\\
\\
{\rm (i)} if $l=0$,
\begin{equation}
F^0(v):=
\begin{cases}
\frac{8}{3}\#(S(v)) & \text{if }v\in BV(I,\{\pm1\}),\\
+\infty & \text{otherwise.}
\end{cases}
\end{equation}
{\rm(ii)} if $l\in(0,+\infty)$,
\begin{equation}
F^0(v):=
\begin{cases}
\displaystyle\frac{1}{l}\int_{I}\Bigl(v^2(t)-1\Bigr)^2\,dt + {l}\int_{I}(\dot{v}(t))^2\,dt& \text{if }v\in W^{1,2}_{|per|}(I),\\
+\infty & \text{otherwise.}
\end{cases}
\end{equation}
where we have set $W^{1,2}_{|per|}(I):=\{v\in W^{1,2}(I):\, |v(0)|=|v(1)|\}$.\\
\\
{\rm(iii)} if $l=+\infty$,
\begin{equation}
F^0(v):=
\begin{cases}
0 & \text{if }v=const.,\\
+\infty & \text{otherwise.}
\end{cases}
\end{equation}
\label{cicasol}
\end{thm}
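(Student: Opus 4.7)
The plan is to reduce Theorem~\ref{cicasol} to the $\Gamma$-convergence result of Cicalese and Solombrino~\cite[Theorem~4.2]{CS}, which is the analogous statement expressed in terms of the order parameter $z_n^i = (2\sqrt{2}/\sqrt{4-\alpha_n})\sin(\theta_n^i/2)$, by exploiting the asymptotic equivalence between $v_n$ and $z_n$ already recorded in the discussion preceding the statement.

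First I would make quantitative the closeness of $v_n$ and $z_n$ along sequences with equibounded scaled energy. From the Taylor expansions $\theta_{\alpha_n}=\sqrt{(4-\alpha_n)/2}+O((4-\alpha_n)^{3/2})$ and $\sin(\theta_n^i/2)=\theta_n^i/2+O((\theta_n^i)^3)$, one derives a multiplicative identity
\[
v_n^i = z_n^i\,(1+\rho_n^i), \qquad |\rho_n^i|\leq C\bigl[(\theta_n^i)^2+(4-\alpha_n)\bigr].
\]
For any sequence with $F_n^{\alpha_n}(v_n)\leq C$, Proposition~\ref{compact} (applied via the scaling in~(\ref{abusenergy2})) forces $\theta_n^i\to 0$ uniformly in $i$, so $\|\rho_n\|_{L^\infty(I)}\to 0$. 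Combined with the uniform $L^1$ bounds on $z_n$ available in each of the three regimes, this yields $\|v_n-z_n\|_{L^1(I)}\to 0$.

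Next I would transfer compactness and both $\Gamma$-inequalities. For the liminf part, given $v_n\to v$ in $L^1(I)$ with equibounded scaled energies, the associated $z_n$ converges to the same $v$ in $L^1$; since $F_n^{\alpha_n}(v_n)=F_n^{\alpha_n}(z_n)$, the bound $\liminf_n F_n^{\alpha_n}(v_n)\geq F^0(v)$ in each regime is a direct consequence of~\cite[Theorem~4.2]{CS}. For the limsup part, given $v$ in the effective domain of $F^0$, I would pick a recovery sequence $\tilde z_n\to v$ produced in~\cite{CS}, invert the change of variable by setting $\tilde\theta_n^i := 2\arcsin(\tilde z_n^i\sqrt{4-\alpha_n}/(2\sqrt{2}))$ (well defined for $n$ large, since the argument lies in $[-1,1]$ by smallness) and then $\tilde v_n^i := \tilde\theta_n^i/\theta_{\alpha_n}$. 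The asymptotic equivalence gives $\tilde v_n\to v$ in $L^1(I)$, while the identity $F_n^{\alpha_n}(\tilde v_n)=F_n^{\alpha_n}(\tilde z_n)$ transfers the energy bound.

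The delicate point is the intermediate regime $l\in(0,+\infty)$: here the target is a Modica-Mortola-type functional that is quadratically sensitive to perturbations of the bulk term $(v^2-1)^2$, so one must verify that the $O((\theta_n^i)^2)$ correction in the Taylor expansion contributes only lower-order terms once paired with the $(4-\alpha_n)^{-3/2}$ scaling of $F_n^{\alpha_n}$. A careful bookkeeping of the expansion of the discrete energy shows that the cross terms carry an extra power of $(4-\alpha_n)$ and vanish in the limit; the extremal cases $l=0$ and $l=+\infty$ follow from the same comparison combined, respectively, with the crease-energy analysis of Theorem~\ref{gammalim} and the asymptotics~(\ref{asymptreg}), and with the triviality of the limit functional on constants.
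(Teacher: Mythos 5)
Your proposal is correct and rests on the same key idea as the paper's proof --- namely, that in the small-angle regime forced by Proposition~\ref{compact} the flat variable $v_n^i=\theta_n^i/\theta_{\alpha_n}$ and the Cicalese--Solombrino variable $z_n^i$ differ by a multiplicative factor $1+O(\varepsilon_n)+O((\theta_n^i)^2)\to1$, so that the whole analysis can be deferred to \cite[Theorem~4.2]{CS} --- but you implement the reduction differently. You transfer the $\Gamma$-limit at the level of the order parameter: $F_n^{\alpha_n}(v_n)$ and the CS functional evaluated at $z_n$ are literally the same number for the same configuration $\theta_n$, and $\|v_n-z_n\|_{L^1}\to0$ along bounded-energy sequences, so compactness, liminf and (via the inverse $\arcsin$ substitution) limsup all carry over as a black box. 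The paper instead transfers at the level of the estimates: it takes the discrete lower bound of \cite{CS} expressed through $\sin(\theta_n^i/2)$, substitutes $\sin\theta\simeq\theta$ with multiplicative errors $\gamma_n\to0$, and rewrites it in the $v$-variable to obtain the explicit inequality (\ref{finalestimate}), from which the liminf in regimes (i)--(ii) follows ``as remarked in \cite{CS}''; the limsup is likewise deferred to the constructive argument there. Your route is cleaner as an abstract statement of invariance of $\Gamma$-limits under asymptotically equivalent reparametrizations, while the paper's route has the advantage of producing the quantitative estimate (\ref{finalestimate}), which is reused verbatim in the proof of Theorem~\ref{equivalence}(ii). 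The one step you should make explicit is the passage from $\|\rho_n\|_{L^\infty}\to0$ to $\|v_n-z_n\|_{L^1}\to0$: this needs a uniform $L^1$ bound on $z_n$ (or $v_n$), which is not supplied by the potential term alone in the regime $l=+\infty$ where its coefficient degenerates, but is harmless because in the liminf inequality the convergence $v_n\to v$ in $L^1(I)$ is part of the hypothesis and $|z_n^i/v_n^i|$ is uniformly bounded, while recovery sequences are uniformly bounded by construction.
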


\begin{proof}

In order to simplify the notation, we put $\varepsilon_n:=4-\alpha_n\to0$ as $n\to\infty$. Let $\{v_n\}$ be a sequence in $\widetilde{\mathcal D}_n(I)$ such that $\displaystyle\sup_n\frac{E_n^{\varepsilon_n}(v_n)}{\varepsilon_n^{3/2}}\leq C<\infty$. As remarked before, correspondingly, there exists a sequence $\{\theta_n\}$ in $\mathcal{D}_n(I)$ such that $\displaystyle\sup_n\frac{E_n^{\varepsilon_n}(\theta_n)}{\varepsilon_n^{3/2}}\leq C<\infty$, satisfying $\theta^i_n\to0$ uniformly with respect to $i$ by Proposition~\ref{compact}. 

From the estimates contained in the proof of \cite[Theorem~4.2]{CS} we get

\begin{equation*}
E_n^{\varepsilon_n}(\theta_n)\geq 8\sum_{i=0}^{n-1}\Bigl[\sin^2\Bigl(\frac{\theta_n^i}{2}\Bigr)-\frac{\varepsilon_n}{8}\Bigr]^2+2(1-\gamma_n)\sum_{i=0}^{n-1}\Bigl[\sin\Bigl(\frac{\theta_n^{i+1}}{2}\Bigr)-\sin\Bigl(\frac{\theta_n^{i}}{2}\Bigr)\Bigr]^2,
\end{equation*}
for some $\gamma_n\to0$. Since $\sin\theta\simeq\theta$ as $\theta\to0$, we may improve the estimate obtaining
\begin{equation*}
E_n^{\varepsilon_n}(\theta_n)\geq 8(1-\gamma'_n)\sum_{i=0}^{n-1}\Bigl[\Bigl(\frac{\theta_n^i}{2}\Bigr)^2-\frac{\varepsilon_n}{8}\Bigr]^2+\frac{(1-\gamma''_n)}{2}\sum_{i=0}^{n-1}\Bigl[\Bigl(\frac{\theta_n^{i+1}}{2}\Bigr)-\Bigl(\frac{\theta_n^{i}}{2}\Bigr)\Bigr]^2,
\end{equation*}
for suitable $\gamma'_n,\gamma''_n\to0$. In terms of the new order parameter $v^i_n$ defined by (\ref{newparameter}) the previous inequality now reads
\begin{equation*}
\begin{split}
E_n^{\varepsilon_n}(\theta_n)&\geq \frac{2{\theta^4_{\varepsilon_n}}}{\lambda_n}(1-\gamma'_n)\sum_{i=0}^{n-1}\lambda_n\Bigl[(v_n^i)^2-\frac{\varepsilon_n}{2{\theta^2_{\varepsilon_n}}}\Bigr]^2\\
&+\frac{\theta^2_{\varepsilon_n}\lambda_n}{8}(1-\gamma''_n)\sum_{i=0}^{n-1}\lambda_n\Bigl(\frac{v_n^{i+1}-v_n^{i}}{\lambda_n}\Bigr)^2,
\end{split}
\end{equation*}
where $\lambda_n=\frac{1}{n}$. If we multiply both the sides by $8/\sqrt{2}{\varepsilon^{3/2}_n}$, since $\displaystyle\frac{\varepsilon_n}{2{\theta^2_{\varepsilon_n}}}\to1$ we get
\begin{equation*}
\begin{split}
\frac{8E_n^{\varepsilon_n}(\theta_n)}{\sqrt{2}{\varepsilon^{3/2}_n}}&\geq \frac{8\sqrt{2}{\theta^4_{\varepsilon_n}}}{\lambda_n{\varepsilon^{3/2}_n}}(1-\gamma'_n)\sum_{i=0}^{n-1}\lambda_n\Bigl[(v_n^i)^2-1\Bigr]^2\\
&+\frac{\sqrt{2}\theta^2_{\varepsilon_n}\lambda_n}{2{\varepsilon^{3/2}_n}}(1-\gamma''_n)\sum_{i=0}^{n-1}\lambda_n\Bigl(\frac{v_n^{i+1}-v_n^{i}}{\lambda_n}\Bigr)^2.
\end{split}
\end{equation*}
Since $\theta_{\varepsilon_n}=\arccos(1-\frac{\varepsilon_n}{4})$ and $\theta_{\varepsilon_n}\simeq\frac{\sqrt{\varepsilon_n}}{\sqrt{2}}$ as $\varepsilon_n\to0$, we note that $\displaystyle\frac{8\sqrt{2}{\theta^4_{\varepsilon_n}}}{\lambda_n{\varepsilon^{3/2}_n}}\simeq \frac{4\sqrt{\varepsilon_n}}{\sqrt{2}\lambda_n}$ and $\displaystyle\frac{\sqrt{2}\theta^2_{\varepsilon_n}\lambda_n}{2{\varepsilon^{3/2}_n}}\simeq\frac{\sqrt{2}\lambda_n}{4\sqrt{\varepsilon_n}}$ as $n\to\infty$. Thus, we finally get
\begin{equation}
\begin{split}
\frac{8E_n^{\varepsilon_n}(\theta_n)}{\sqrt{2}{\varepsilon^{3/2}_n}}&\geq \frac{4\sqrt{\varepsilon_n}}{\sqrt{2}\lambda_n}(1-\widetilde{\gamma}'_n)\sum_{i=0}^{n-1}\lambda_n\Bigl[(v_n^i)^2-1\Bigr]^2\\
&+\frac{\sqrt{2}\lambda_n}{4\sqrt{\varepsilon_n}}(1-\widetilde{\gamma}''_n)\sum_{i=0}^{n-1}\lambda_n\Bigl(\frac{v_n^{i+1}-v_n^{i}}{\lambda_n}\Bigr)^2,
\label{finalestimate}
\end{split}
\end{equation}
for suitable $\widetilde{\gamma}'_n, \widetilde{\gamma}''_n\to0$. The estimate (\ref{finalestimate}) implies the liminf inequality both in case (i) and (ii) as remarked in \cite{CS}, and the limsup inequality can be obtained in both cases by the constructive argument contained therein, so we will omit the proof.
\end{proof}

\begin{oss}\label{osservazione} (asymptotic behaviour of $C_\alpha$). As remarked before, $C_{\alpha}\to0$ as $\alpha\to4^-$. However, we may use Theorem~\ref{cicasol}(i) to refine this estimate and determine the right order of $C_{\alpha_n}$ with respect to $4-\alpha_n$ as $\alpha_n\to4$.

In the regime $\lambda_n<\!<\!(4-\alpha_n)^{1/2}$ we can compute the limit of energies $F_n^{\alpha_n}(v)$ first as $n\to\infty$ while keeping $\alpha_n\equiv\alpha_0\neq4$ fixed, and then the limit as $\alpha_0\to4^-$. Thanks to Theorem~\ref{gammalim} and the continuity result ensured by Proposition~\ref{continuity}, we obtain
\begin{equation}
F^{\alpha_0}(v):=\displaystyle\Gamma\hbox{-}{\lim_{n\to+\infty}}\frac{8E_n^{\alpha_n}(v)}{\sqrt{2}{(4-\alpha_n)^{3/2}}}=\frac{8C_{\alpha_0}}{\sqrt{2}{(4-\alpha_0)^{3/2}}}\#(S(v)),
\label{consistent}
\end{equation}
whence, by means of Theorem~\ref{cicasol}(i), we get
\begin{equation}
F^0(v):=\displaystyle\Gamma\hbox{-}{\lim_{\alpha_0\to4}}F^{\alpha_0}(v)=\frac{8}{3}\#(S(v)). 
\end{equation}
The convergence of minimum problems as $\alpha\to4^-$ finally gives
\begin{equation}
\lim_{\alpha\to4^-}\frac{3C_\alpha}{\sqrt{2}(4-\alpha)^{3/2}}=1.
\label{stimarussi}
\end{equation}
\end{oss}

Thus, near the ferromagnet-helimagnet transition point, the energy $C_\alpha$ coincides with the energy $E_{dw}\propto (4-\alpha)^{3/2}$ for the excitation of a chiral domain wall separating two domains of opposite chirality, which is a well-known universal low-temperature property of frustrated classical spin chains (see, e.g., Dmitriev and Krivnov~\cite{DK}).

\section{A link with the gradient theory of phase transitions}

In this section we show that the variational asymptotic behaviour of the energies $F_n^\alpha$ for any $\alpha\in[0,4]$, both in the case of fixed $\alpha$ (Theorem~\ref{gammalim}) and in the case $\alpha\simeq4$ (Theorem~\ref{cicasol}), is the same as that of a parametrized family of Modica-Mortola type functionals, thus providing an interesting connection between frustrated lattice spin systems and the gradient theory of phase transitions (see also \cite[Section~6]{BC}). 

In order to do that in the framework of the \emph{equivalence by $\Gamma\hbox{-}$convergence}, we recall some definitions about $\Gamma\hbox{-}$equivalence for families of parametrized functionals, uniform equivalence, regular and singular points, as introduced by Braides and Truskinovsky~\cite{BT}.

\begin{defn}[$\Gamma\hbox{-}$equivalence]\label{defequiv} Let $\mathcal{A}$ be a set of parameters. Two families of parametrized functionals $F_n^\alpha$ and $G_n^\alpha$ are \emph{equivalent at scale 1 at $\alpha_0\in\mathcal{A}$} if $F_n^{\alpha_0}$ and $G_n^{\alpha_0}$ are equivalent at scale 1, i.e.,
\begin{equation}
\displaystyle\Gamma\hbox{-}{\lim_{n\to+\infty}}F_n^{\alpha_0}=\displaystyle\Gamma\hbox{-}{\lim_{n\to+\infty}}G_n^{\alpha_0}
\end{equation}
and these $\Gamma\hbox{-}$limits are non-trivial.
\end{defn}

\begin{defn}[uniform $\Gamma\hbox{-}$equivalence] Let $\mathcal{A}$ be a set of parameters. Two families of parametrized functionals $F_n^\alpha$ and $G_n^\alpha$ are \emph{uniformly equivalent at scale 1 at $\alpha_0\in{\mathcal{A}}$} if for all 
$\alpha_n\to\alpha_0$ we have, up to subsequences,
\begin{equation}
\displaystyle\Gamma\hbox{-}{\lim_{n\to+\infty}}F_n^{\alpha_n}=\displaystyle\Gamma\hbox{-}{\lim_{n\to+\infty}}G_n^{\alpha_n}
\end{equation}
and these $\Gamma\hbox{-}$limits are non-trivial. They are uniformly equivalent on $\mathcal{A}$ if they are uniformly equivalent at all $\alpha_0\in\mathcal{A}$.
\end{defn}

\begin{defn}[regular point] $\alpha_0\in\mathcal{A}$ is a \emph{regular point} if for all 
$\alpha_n\to\alpha_0$ we have, up to a subsequence,
\begin{equation}
\displaystyle\Gamma\hbox{-}{\lim_{n\to+\infty}}F_n^{\alpha_n}=\displaystyle\Gamma\hbox{-}{\lim_{n\to+\infty}}F_n^{\alpha_0}.
\end{equation}
\end{defn}

\begin{defn}[singular point] $\alpha_0\in\mathcal{A}$ is a \emph{singular point} if it is not regular; that is, if 
there exist $\alpha'_n\to\alpha_0$, $\alpha''_n\to\alpha_0$ such that (up to subsequences)
\begin{equation}
\displaystyle\Gamma\hbox{-}{\lim_{n\to+\infty}}F_n^{\alpha'_n}\neq\displaystyle\Gamma\hbox{-}{\lim_{n\to+\infty}}F_n^{\alpha''_n}.
\end{equation}
\end{defn}

According to the previous definitions, each $0\leq\alpha_0<4$ is a regular point for $F_n^{\alpha}$, since, as already observed in Remark~\ref{osservazione}, for any sequence $\alpha_n\to\alpha_0$, we have
\begin{equation*}
F^{\alpha_0}(v):=\displaystyle\Gamma\hbox{-}{\lim_{n\to+\infty}}F_n^{\alpha_n}(v)=\frac{8C_{\alpha_0}}{\sqrt{2}{(4-\alpha_0)^{3/2}}}\#(S(v)).
\end{equation*}
As a consequence of Theorem~\ref{cicasol}, instead, $\alpha_0=4$ is a singular point for $F_n^{\alpha}$.

\begin{figure}[htp]
\centering
\begin{tikzpicture}
\begin{axis} [axis lines=middle,
enlargelimits,
xmax=5,ymax=3,
xtick={},
xticklabels={},
yticklabels={},
xlabel=$\alpha$,ylabel=$\frac{1}{n}$]
\addplot [domain=0.5:4,
samples=40,smooth,thick,blue]
{sqrt(4-x)};
\draw [dashed]({axis cs:4,0}|-{rel axis cs:0,1}) -- ({axis cs:4,0}|-{rel axis cs:0,0});
\node at
(rel axis cs:0.45,0.25) {Helimagnetic};
\node[fill=white] at
(rel axis cs:0.45,0.65) {Ferromagnetic};
\end{axis}
\end{tikzpicture}
\caption{The $\frac{1}{n}-\alpha$ space. In blue the failure curve $\frac{1}{n}=(4-\alpha)^{1/2}$.}
\label{picture}
\end{figure}
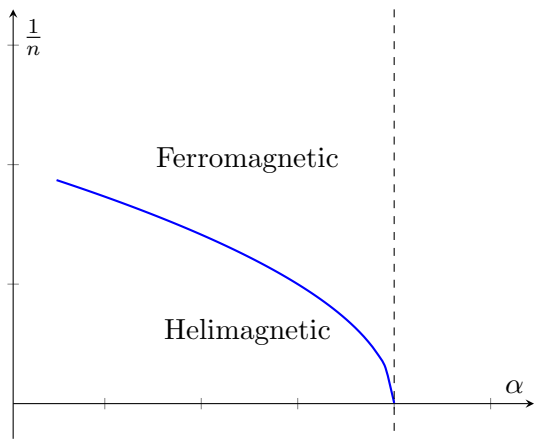

The behaviour of the system close to the transition point $\alpha=4$ can be pictured in the $\frac{1}{n}\hbox{--}\alpha$ plane (see Fig.~\ref{picture}), where the crossover line $\frac{1}{n}=(4-\alpha)^{1/2}$ separates a zone where there is \emph{helimagnetic order} ($\frac{1}{n}<\!<\!(4-\alpha)^{1/2}$) from one where we have \emph{ferromagnetic order} ($\frac{1}{n}>\!>\!(4-\alpha)^{1/2}$).

For our purposes, it is useful to recall the well known $\Gamma\hbox{-}$convergence result in gradient theory of phase transitions due to Modica and Mortola~\cite{MM}. Let $\Omega\subset\R$ be an open set, $u:\Omega\to\R$ and $W=W(u)$ a non-convex energy such that $W\geq0$, $W(u)\geq c(u^2-1)$ and $W=0$ if and only if $u=a,b$. $W$ is called a \emph{double-well} potential. Let $C>0$ and consider the energies

\begin{equation}
F_n(u)=n\int_\Omega W(u)\,dx+\frac{C^2}{n}\int_\Omega(\dot{u})^2\,dx,\quad u\in W^{1,2}(\Omega).
\end{equation}

\begin{thm}[Modica-Mortola's theorem] The functionals $F_n$ above $\Gamma$\hbox{-}\\converge as $n\to\infty$ and with respect to the $L^1(\Omega)$ convergence to the functional

\begin{equation}
F_\infty(u)=
\begin{cases}
C\cdot c_W\#(S(u)\cap\Omega), & \text{if }u\in\{a,b\} \text{ a.e.}\\
+\infty, & \text{otherwise},
\end{cases}
\end{equation}
\\
where $c_W:=2\int_a^b\sqrt{W(s)}\,ds$.
\label{ModicaMortola}
\end{thm}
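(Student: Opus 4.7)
The plan is to follow the classical three-step $\Gamma$-convergence strategy for Modica--Mortola functionals: equi-coercivity, a liminf inequality via the elementary AM--GM trick, and a recovery sequence built from the one-dimensional optimal profile connecting $a$ and $b$.

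For the liminf I introduce the primitive
\[
\phi(t) := \int_a^t \sqrt{W(s)}\,ds, \qquad \phi(b) - \phi(a) = c_W/2.
\]
The pointwise AM--GM inequality gives $n W(u) + \tfrac{C^2}{n}(\dot u)^2 \geq 2C\sqrt{W(u)}\,|\dot u| = 2C\,|(\phi \circ u)'|$, hence $F_n(u) \geq 2C\int_\Omega |(\phi \circ u)'|\,dx$. If $\sup_n F_n(u_n) < +\infty$ and $u_n \to u$ in $L^1(\Omega)$, the first term of $F_n$ forces $\int_\Omega W(u_n)\,dx \to 0$, so by the coercivity assumption on $W$ the limit $u$ takes values in $\{a,b\}$ almost everywhere. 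Since $\phi$ is continuous, $\phi(u_n) \to \phi(u)$ in $L^1(\Omega)$, and lower semicontinuity of the total variation yields
\[
\liminf_n \int_\Omega |(\phi \circ u_n)'|\,dx \geq |D(\phi \circ u)|(\Omega) = \tfrac{c_W}{2}\,\#(S(u) \cap \Omega),
\]
whence $\liminf_n F_n(u_n) \geq C\, c_W\, \#(S(u) \cap \Omega)$. Equi-coercivity follows from the $BV$-bound on $\phi(u_n)$ by pulling back through the inverse of $\phi$ on each well.

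For the limsup, by a density argument it suffices to handle $u \in BV(\Omega;\{a,b\})$ with finite jump set $\{t_1,\dots,t_M\}$. In a window of radius $\delta_n$ around each $t_j$, with $\delta_n \to 0$ and $n \delta_n \to +\infty$, I paste a rescaled copy of the optimal profile $\gamma \colon \R \to (a,b)$ solving $\dot\gamma = \tfrac{1}{C}\sqrt{W(\gamma)}$ with $\gamma(\pm\infty) \in \{a,b\}$ chosen according to the orientation of the jump, setting $u_n(t) := \gamma(n(t-t_j))$; outside the windows I set $u_n = u$. Equipartition of energy along $\gamma$ makes the two integrands in $F_n$ coincide after the rescaling $s = n(t-t_j)$, so the contribution of each window tends to $2C \int_a^b \sqrt{W(s)}\,ds = C\, c_W$; the tails cut off by the windows are negligible thanks to the exponential decay of $\gamma$ to its limits.

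The main obstacle is the compactness step, namely transferring the $BV$-bound on $\phi(u_n)$ back to $L^1$-compactness of $u_n$ itself, together with the identification of the limit $u$ as a piecewise constant function valued in $\{a,b\}$ with a finite number of jumps; this is where the coercivity hypothesis on $W$ (i.e., the quadratic-type lower bound away from the wells) has to be used carefully. Once this is settled, the liminf reduces to lower semicontinuity of the total variation and the limsup to a direct computation on the one-dimensional optimal profile.
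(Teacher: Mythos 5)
There is no internal proof to compare against here: the paper states Theorem~\ref{ModicaMortola} as a classical result and simply cites Modica and Mortola~\cite{MM}. Your sketch is the standard one-dimensional argument (Young's inequality combined with the primitive $\phi(t)=\int_a^t\sqrt{W(s)}\,ds$ and lower semicontinuity of total variation for the lower bound; the equipartitioned optimal profile for the upper bound), and it is correct in substance, including the observation that the energy bound alone forces the limit to lie in $\{a,b\}$ a.e.\ with finitely many jumps, so that equi-coercivity is not actually needed for the $\Gamma$-limit statement itself. Two details would need tightening in a full write-up. First, $\phi(u_n)\to\phi(u)$ in $L^1(\Omega)$ is not automatic from $u_n\to u$ in $L^1$ because $\phi$ need not be globally Lipschitz; the standard fix is to truncate $u_n$ at level $\max(|a|,|b|)$, which does not increase the energy, after which one only needs $L^1_{loc}$ convergence for the lower semicontinuity of the total variation. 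Second, pasting $\gamma(n(t-t_j))$ against the constants outside the windows leaves a mismatch of size $|\gamma(\pm n\delta_n)-a|$ (resp.\ $b$) at the window boundaries, so one should either work with an $\varepsilon$-almost-optimal profile that is exactly constant outside a compact set or interpolate over an extra interval of length $1/n$; the exponential decay you invoke is only guaranteed when $W$ is nondegenerate at the wells. Neither point affects the validity of the conclusion.
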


The following theorem states the announced uniform equivalence by $\Gamma\hbox{-}$convergence of energies $F_n^{\alpha}$ with parametrized Modica-Mortola type functionals.

\begin{thm}
Setting $\lambda_{n,\alpha}:=2n{\theta^4_{\alpha}}$, $M_\alpha:=3C_{\alpha}/8$, $\mu_\alpha:=\frac{\sqrt{2}(4-\alpha)^{3/2}}{8}$, the energies
\begin{equation*}
G_n^{\alpha}(v)=
\begin{cases}
\displaystyle\frac{1}{\mu_\alpha}\Bigl[\lambda_{n,\alpha}\int_{I}\Bigl(v^2-1\Bigr)^2\,dt + \frac{M_\alpha^2}{\lambda_{n,\alpha}}\int_{I}(\dot{v})^2\,dt\Bigr], & \text{if $v\in W^{1,2}_{|per|}(I)$,}\\
+\infty, & \text{otherwise in $L^1_{loc}(\R)$,}
\end{cases}
\end{equation*}
and $F_n^{\alpha}(v):=\frac{1}{\mu_\alpha} E_n^{\alpha}(v)$ are uniformly equivalent by $\Gamma$\hbox{-}convergence on $[0,4]$. Moreover,
\begin{enumerate}
\item[\rm(i)] each $\alpha_0\in[0,4)$ is a regular point;
\item[\rm(ii)] $\alpha_0=4$ is a singular point.
\end{enumerate}
\label{equivalence}
\end{thm}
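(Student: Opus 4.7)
The plan is to verify the definition of uniform $\Gamma$-equivalence on $[0,4]$ directly: for each $\alpha_0 \in [0,4]$ and each sequence $\alpha_n\to\alpha_0$, one computes (up to subsequences) the $\Gamma$-limit of $F_n^{\alpha_n}$ and of $G_n^{\alpha_n}$ and checks that they coincide and are non-trivial. The constant $M_\alpha = 3C_\alpha/8$ has been engineered precisely for this matching: for the double-well $W(v) = (v^2-1)^2$ one has $c_W := 2\int_{-1}^{1}(1-s^2)\,ds = 8/3$, so Modica-Mortola produces the interface cost $M_\alpha\cdot c_W = C_\alpha$, which is exactly the $\Gamma$-limit coefficient of $E_n^\alpha$ in the helimagnetic regime.

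At a regular point $\alpha_0 \in [0,4)$, the proof of Theorem~\ref{gammalim} adapts verbatim to sequences $\alpha_n\to\alpha_0$ in the variable $v=\theta/\theta_\alpha$: the recovery sequences are built from almost-optimal test functions of the crease problem (\ref{crease}), which depend continuously on $\alpha$, and Proposition~\ref{continuity} together with the continuity of $\theta_\alpha$ and $\mu_\alpha$ gives
\begin{equation*}
\Gamma\hbox{-}\lim_{n}F_n^{\alpha_n}(v) = \frac{C_{\alpha_0}}{\mu_{\alpha_0}}\,\#(S(v))
\end{equation*}
on $BV(I,\{\pm 1\})$ with the inherited periodic condition. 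For $G_n^{\alpha_n}$, one has $\lambda_{n,\alpha_n}=2n\theta_{\alpha_n}^4\to+\infty$ while $M_{\alpha_n}\to M_{\alpha_0}>0$ and $\mu_{\alpha_n}\to\mu_{\alpha_0}>0$, so Theorem~\ref{ModicaMortola} applies and, after dividing by $\mu_{\alpha_n}$, yields the same sharp-interface limit $(C_{\alpha_0}/\mu_{\alpha_0})\#(S(v))$. Both limits are non-trivial; hence $\alpha_0$ is regular for both families and they are equivalent at $\alpha_0$.

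At the singular point $\alpha_0 = 4$ I would use Theorem~\ref{cicasol} for $F_n^{\alpha_n}$ and compute the $\Gamma$-limit of $G_n^{\alpha_n}$ in the three regimes parametrized by $l=\lim_n\sqrt{2}\lambda_n/(4\sqrt{4-\alpha_n})$. Using $\theta_{\alpha_n}\sim\sqrt{(4-\alpha_n)/2}$ and the sharp asymptotic (\ref{asymptreg}) derived in Remark~\ref{osservazione}, elementary algebra yields
\begin{equation*}
A_n := \frac{\lambda_{n,\alpha_n}}{\mu_{\alpha_n}}\to\frac{1}{l},\qquad B_n := \frac{M_{\alpha_n}^2}{\mu_{\alpha_n}\lambda_{n,\alpha_n}}\to l,\qquad \frac{M_{\alpha_n}}{\mu_{\alpha_n}}=\sqrt{A_nB_n}\to 1.
\end{equation*}
In the intermediate case $l\in(0,+\infty)$, both coefficients of $G_n^{\alpha_n}$ have finite positive limits, so continuous convergence of the functionals together with a mollified recovery sequence in $W^{1,2}_{|per|}(I)$ produces the Cahn-Hilliard limit $(1/l)\int_I(v^2-1)^2\,dt+l\int_I(\dot v)^2\,dt$, matching case (ii) of Theorem~\ref{cicasol}. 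For $l=0$, the rescaling $\varepsilon_n:=\sqrt{B_n/A_n}\to 0$ rewrites
\begin{equation*}
G_n^{\alpha_n}(v) = \frac{M_{\alpha_n}}{\mu_{\alpha_n}}\Bigl[\frac{1}{\varepsilon_n}\int_I(v^2-1)^2\,dt + \varepsilon_n\int_I(\dot v)^2\,dt\Bigr],
\end{equation*}
to which Theorem~\ref{ModicaMortola} applies, giving $(M_{\alpha_n}/\mu_{\alpha_n})\cdot c_W\,\#(S(v))\to (8/3)\#(S(v))$, i.e.\ case (i). For $l=+\infty$, the coefficient of $\int(\dot v)^2$ diverges while that of the potential vanishes, forcing bounded-energy sequences to be approximately constant and giving $\Gamma$-limit $0$ on constants and $+\infty$ otherwise, matching case (iii). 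The three limits at $\alpha_0=4$ differ, so $\alpha_0=4$ is singular for both families.

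The main obstacle will be the bookkeeping at the singular point in the degenerate regimes $l\in\{0,+\infty\}$: Modica-Mortola cannot be quoted with a fixed constant, and one must keep track of the $\alpha_n$-dependent coefficients, exploiting the precise asymptotic $M_{\alpha_n}/\mu_{\alpha_n}\to 1$ (equivalent to (\ref{asymptreg})) to recover the correct interfacial cost $c_W=8/3$. The recovery sequences in all regimes must also be chosen compatible with the periodic boundary condition $|v(0)|=|v(1)|$, along the lines of the construction used in the proof of Theorem~\ref{cicasol}.
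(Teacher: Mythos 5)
Your proposal is correct and follows essentially the same route as the paper: at regular points it reduces to Modica--Mortola with the matching $M_{\alpha}c_W=C_\alpha$ (the paper performs the equivalent computation after changing back to the variable $\theta$, with $c_{\mathcal W}=\tfrac83\theta_{\alpha_0}^3$), and at $\alpha_0=4$ it uses the asymptotics $M_{\alpha_n}/\mu_{\alpha_n}\to1$ from (\ref{stimarussi}) to identify the two coefficients with $1/l$ and $l$ and runs the same three-regime case analysis (Modica--Mortola for $l=0$, lower semicontinuity plus density for $l\in(0,\infty)$, constants for $l=\infty$). The coefficients you call $A_n$ and $B_n$ are exactly the paper's $\tfrac{4n\sqrt{\varepsilon_n}}{\sqrt2}(1+\eta_n)$ and $K_n(1+\eta_n')$, so the bookkeeping you flag as the main obstacle is handled identically there.
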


\begin{proof} As before, we put $\varepsilon_n:=4-\alpha_n$, so that $\varepsilon_n\geq0$.\\
(i) Let $\varepsilon_n\to\varepsilon_0\neq0$ and $\{v_n\}$ be a sequence with equibounded energy. Correspondingly, by (\ref{newparameter}) we may find a sequence $\{\theta_n\}$ such that $v_n=\theta_n/\theta_{\varepsilon_n}$. The continuity of the energies $G_n^{\varepsilon}$ with respect to the parameter $\varepsilon=\varepsilon_n$, ensured also by Proposition~\ref{continuity}, allows us to consider, without loss of generality, the energies
\begin{equation*}
G_n^{\varepsilon_0}(v)=
\displaystyle\frac{8}{\sqrt{2}\varepsilon_0^{3/2}}\Bigl[{2n{\theta^4_{\varepsilon_0}}}\int_{I}\Bigl(v^2-1\Bigr)^2\,dt + \frac{1}{2n\theta_{\varepsilon_0}^4}\Bigl(\frac{3C_{\varepsilon_0}}{8}\Bigr)^2\int_{I}(\dot{v})^2\,dt\Bigr],
\end{equation*}
where $v=\theta/\theta_{\varepsilon_0}$. After simplifying the constants, we may rewrite the energies in terms of $\theta$ as
\begin{equation*}
G_n^{\varepsilon_0}(\theta)=
\displaystyle\frac{8}{\sqrt{2}\varepsilon_0^{3/2}}\Bigl[{2n}\int_{I}\Bigl(\theta^2-\theta_{\varepsilon_0}^2\Bigr)^2\,dt + \frac{1}{2n}\Bigl(\frac{3C_{\varepsilon_0}}{8 \theta_{\varepsilon_0}^3}\Bigr)^2\int_{I}(\dot{\theta})^2\,dt\Bigr].
\end{equation*}
In order to compute the $\Gamma\hbox{-}$limit as $n\to\infty$ of the energies $G_n^{\varepsilon_0}$, we may apply the $\Gamma\hbox{-}$convergence result by Modica and Mortola (Theorem~\ref{ModicaMortola}), thus obtaining
\begin{equation*}
G^{\varepsilon_0}(\theta):=\displaystyle\Gamma\hbox{-}{\lim_{n\to+\infty}}G_n^{\varepsilon_0}(\theta)=\frac{8C_{\varepsilon_0}}{\sqrt{2}\varepsilon_0^{3/2}}\#(S(\theta)),
\end{equation*}
since
\begin{equation*}
c_{W}=2\int_{-\theta_{\varepsilon_0}}^{\theta_{\varepsilon_0}}|\theta^2-\theta_{\varepsilon_0}^2|\,d\theta=\frac{8}{3}\theta_{\varepsilon_0}^3.
\end{equation*}
This result coincides with (\ref{consistent}), once we remark that $\#(S(v))=\#(S(\theta))$.
\\
(ii) Let $\varepsilon_n\to0$ and $v\in W^{1,2}_{loc}(\R)$. The estimates contained in the proof of Theorem~\ref{cicasol} and equation (\ref{stimarussi}) allow us to rewrite the functional $G^{\varepsilon_n}_n$ as
\begin{equation*}
G_n^{\varepsilon_n}(v)=
\displaystyle\frac{4n\sqrt{\varepsilon_n}}{\sqrt{2}}(1+\eta_n)\int_{I}\Bigl(v^2-1\Bigr)^2\,dt +\frac{\sqrt{2}}{4n\sqrt{\varepsilon_n}}(1+\eta'_n)\int_{I}(\dot{v})^2\,dt,
\end{equation*}
for suitable sequences $\eta_n,\eta'_n\to0$. In order to simplify the notation, we put
\begin{equation*}
K_n:=\frac{\sqrt{2}}{4n\sqrt{\varepsilon_n}},
\end{equation*}
and we write
\begin{equation*}
G_n^{\varepsilon_n}(v)=
\displaystyle\frac{1}{K_n}(1+\eta_n)\int_{I}\Bigl(v^2-1\Bigr)^2\,dt +K_n(1+\eta'_n)\int_{I}(\dot{v})^2\,dt.
\end{equation*}
We distinguish between three cases:\\
\noindent
(a) $K_n\to0$. In this case, we apply again Theorem~\ref{ModicaMortola} (with $C=1$), thus obtaining
\begin{equation}
G^0(v):=\displaystyle\Gamma\hbox{-}{\lim_{n\to+\infty}}G_n^{\varepsilon_n}(v)=\frac{8}{3}\#(S(v)),
\end{equation}
since $c_W=2\int_{-1}^1|v^2-1|\,dv=\frac{8}{3}$.\\
\noindent
(b) $K_n\to l\in(0,+\infty)$. A sequence $v_n$ with equibounded energy is weakly compact in $W^{1,2}_{|per|}(I)$, then by lower semicontinuity in $W^{1,2}_{|per|}(I)$ we get
\begin{equation*}
\liminf_nG_n^{\varepsilon_n}(v_n)\geq\displaystyle\frac{1}{l}\int_{I}\Bigl(v^2-1\Bigr)^2\,dt +l\int_{I}(\dot{v})^2\,dt.
\end{equation*}
In order to obtain the limsup inequality, we can argue by density considering $v_n\in W^{1,2}_{|per|}(I)\cap C^\infty(\overline{I})$, $v_n\to v$ such that
\begin{equation*}
\lim_nG_n^{\varepsilon_n}(v_n)=\displaystyle\frac{1}{l}\int_{I}\Bigl(v^2-1\Bigr)^2\,dt +l\int_{I}(\dot{v})^2\,dt.
\end{equation*}
\noindent
(c) $K_n\to+\infty$. Let $v$ be a constant function, and consider the constant sequence $v_n\equiv v$. Trivially,
\begin{equation*}
\liminf_n G_n^{\varepsilon_n}(v_n)=\liminf_n\Bigl(\displaystyle\frac{1}{K_n}(1+\eta_n)\int_{I}\Bigl(v^2_n-1\Bigr)^2\,dt\Bigr)\geq0,
\end{equation*}
and
\begin{equation*}
\lim_n G_n^{\varepsilon_n}(v_n)=0.
\end{equation*}
\end{proof}

The proof of point (i) of Theorem~\ref{equivalence} permits us to deduce an equivalence result also for the energies $E_n^{\alpha}(\theta)$ defined in (\ref{exten}) with Modica Mortola type functionals whose potentials $\mathcal{W}_\alpha(\theta):=(\theta^2-{\theta^2_{\alpha}})^2$ have the wells located at the minimal angles $\theta=\pm\theta_\alpha$. It can be stated as follows.

\begin{cor}
Let $\alpha$ be a positive number, $\alpha\in[0,4)$. The energies $E_n^{\alpha}(\theta)$ and the family of functionals $H_n^{\alpha}(\theta)$ defined on $L^1_{loc}(\R)$ as
\begin{equation*}
H_n^{\alpha}(\theta)=
\begin{cases}
\displaystyle\frac{\lambda_{n,\alpha}}{\theta^4_{\alpha}}\int_{I}\Bigl(\theta^2(t)-{\theta^2_{\alpha}}\Bigr)^2\,dt + \frac{M_\alpha^2}{\lambda_{n,\alpha}\theta_{\alpha}^2}\int_{I}(\dot{\theta}(t))^2\,dt, & \text{if }\theta\in W^{1,2}_{|per|}(I),\\
+\infty, &\text{otherwise},
\end{cases}
\end{equation*}
are uniformly equivalent by $\Gamma$\hbox{-}convergence on $[0,4)$.
\label{corollario}
\end{cor}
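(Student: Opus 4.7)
The plan is to derive the corollary directly from Theorem~\ref{equivalence}(i) by exploiting the exact relationship between the pairs $(E_n^{\alpha}, H_n^{\alpha})$ and $(F_n^{\alpha}, G_n^{\alpha})$. By construction one has $E_n^{\alpha}(\theta) = \mu_\alpha F_n^{\alpha}(v)$ in the variable $v=\theta/\theta_\alpha$, and the substitution $v=\theta/\theta_\alpha$ in $G_n^{\alpha}$, using $\lambda_{n,\alpha}=2n\theta_\alpha^4$ exactly as in the simplification carried out inside the proof of Theorem~\ref{equivalence}(i), gives $H_n^{\alpha}(\theta) = \mu_\alpha G_n^{\alpha}(v)$ as well. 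Hence $(E_n^{\alpha}, H_n^{\alpha})$ are obtained from $(F_n^{\alpha}, G_n^{\alpha})$ by a common multiplicative rescaling and a common $n$-dependent change of variable.

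To establish uniform equivalence on $[0,4)$, I fix $\alpha_0\in[0,4)$ and take an arbitrary sequence $\alpha_n\to\alpha_0$. Since $\alpha_0<4$, both $\theta_{\alpha_0}$ and $\mu_{\alpha_0}$ are strictly positive, and by continuity $\theta_{\alpha_n}\to\theta_{\alpha_0}>0$ and $\mu_{\alpha_n}\to\mu_{\alpha_0}>0$. The $n$-dependent bijection $\theta\mapsto v_n=\theta/\theta_{\alpha_n}$ is therefore uniformly bi-Lipschitz along the sequence, so it preserves the $L^1_{loc}$-topology in the precise sense that $\theta_n\to\theta$ in $L^1_{loc}$ if and only if $\theta_n/\theta_{\alpha_n}\to\theta/\theta_{\alpha_0}$ in $L^1_{loc}$. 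Because multiplication by the convergent positive constants $\mu_{\alpha_n}$ trivially preserves $\Gamma$-convergence, both the liminf and the limsup clauses of $\Gamma\hbox{-}\lim_n F_n^{\alpha_n}$ and $\Gamma\hbox{-}\lim_n G_n^{\alpha_n}$ can be transported from the variable $v$ to the variable $\theta$.

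Applying Theorem~\ref{equivalence}(i), which asserts that along any such sequence $\Gamma\hbox{-}\lim_n F_n^{\alpha_n}=\Gamma\hbox{-}\lim_n G_n^{\alpha_n}=:F^{\alpha_0}$, the transport just described yields
$$\Gamma\hbox{-}\lim_n E_n^{\alpha_n}(\theta) \;=\; \mu_{\alpha_0}\,F^{\alpha_0}(\theta/\theta_{\alpha_0}) \;=\; \Gamma\hbox{-}\lim_n H_n^{\alpha_n}(\theta).$$
Via Remark~\ref{osservazione} this common limit equals $C_{\alpha_0}\#(S(\theta))$ on functions taking values in $\{\pm\theta_{\alpha_0}\}$, and it is non-trivial since $C_{\alpha_0}>0$ by Proposition~\ref{propcrease}. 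As $\alpha_0\in[0,4)$ was arbitrary, this gives uniform equivalence on the whole interval $[0,4)$.

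The only genuinely delicate point is the $n$-dependence of the change of variable $\theta\mapsto\theta/\theta_{\alpha_n}$: one must verify that a recovery sequence $v_n\to\theta/\theta_{\alpha_0}$ constructed for $G_n^{\alpha_n}$ can be pulled back to $\theta_n=\theta_{\alpha_n}v_n$ with $\theta_n\to\theta$ in $L^1_{loc}$, and conversely that any $\theta_n\to\theta$ produces $v_n=\theta_n/\theta_{\alpha_n}$ converging to $\theta/\theta_{\alpha_0}$. Both directions are clean precisely because $\theta_{\alpha_0}>0$, which is exactly what breaks down at $\alpha_0=4$; this explains why the corollary is confined to $[0,4)$, whereas Theorem~\ref{equivalence} could be stated on the full interval $[0,4]$ thanks to the adapted ``flat'' variable $v$.
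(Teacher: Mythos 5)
Your proposal is correct and follows essentially the same route as the paper, which derives the corollary directly from the proof of Theorem~\ref{equivalence}(i): there the substitution $v=\theta/\theta_{\alpha}$ already rewrites $G_n^{\alpha}(v)$ as $\frac{1}{\mu_\alpha}H_n^{\alpha}(\theta)$, so that $(E_n^{\alpha},H_n^{\alpha})$ and $(F_n^{\alpha},G_n^{\alpha})$ differ only by the common factor $\mu_\alpha$ and the change of variable, and the common $\Gamma$-limit $C_{\alpha_0}\#(S(\theta))$ is non-trivial by Proposition~\ref{propcrease}. You merely make explicit the (correct) transport of $\Gamma$-limits under the rescaling $\theta\mapsto\theta/\theta_{\alpha_n}$, which the paper leaves implicit.
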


\section*{Acknowledgements} 
We are grateful to Andrea Braides for suggesting this problem, and we would like to thank him for his advices and many fruitful discussions. We also thank Marco Cicalese, Francesco Solombrino and Leonard Kreutz for some interesting remarks leading to improve the manuscript. The first author gratefully acknowledges the hospitality of the Department of Mathematics, University of Rome ``Tor Vergata'', where a substantial part of this work has been carried out, and the financial support of PRIN~2010, project ``Discrete and continuum variational methods for solid and liquid crystals''.

{\bf This is a pre-print of an article published in Journal of Elasticity. The final authenticated version is available online at: \url{https://doi.org/10.1007/s10659-017-9668-8}}


%
%

\end{document}